\newcommand*{\mailto}[1]{\href{mailto:#1}{\nolinkurl{#1}}}
\newtheorem{theorem}{Theorem}[section]
\newtheorem{lemma}[theorem]{Lemma}
\newtheorem{remark}[theorem]{Remark}
\newtheorem{hypothesis}[theorem]{Hypothesis {\bf H.}\hspace*{-0.6ex}}
\numberwithin{equation}{section}
\newcommand{\C}{\mathbb{C}}
\newcommand{\R}{\mathbb{R}}
\newcommand{\N}{\mathbb{N}}
\newcommand{\E}{\mathrm{e}}
\newcommand{\I}{\mathrm{i}}
\newcommand{\sipmu}{\sigma_\pm^{\mathrm{u}}}
\newcommand{\sipml}{\sigma_\pm^{\mathrm{l}}}
\newcommand{\lau}{\lambda^{\mathrm{u}}}
\newcommand{\lal}{\lambda^{\mathrm{l}}}
\newcommand{\Res}{\mathop{\rm Res}}
\newcommand{\sign}{\mathop{\rm sign}}
\renewcommand{\Im}{\mathop{\rm Im}}
\renewcommand{\Re}{\mathop{\rm Re}}
\newcommand{\clos}{\mathop{\rm clos}}
\newcommand{\inte}{\mathop{\rm int}}
\newcommand{\beq}{\begin{equation}}
\newcommand{\eeq}{\end{equation}}
\newcommand{\bal}{\begin{align}}
\newcommand{\eal}{\end{align}}
\newcommand{\nn}{\nonumber}
\newcommand{\la}{\lambda}
\def\XXint#1#2#3{{\setbox0=\hbox{$#1{#2#3}{\int}$}
     \vcenter{\hbox{$#2#3$}}\kern-.5\wd0}}
\numberwithin{equation}{section}
\begin{document}

\title[Scattering theory for Schr\"odinger operators]{Scattering theory for Schr\"odinger operators on steplike, almost periodic infinite-gap backgrounds}

\author[K. Grunert]{Katrin Grunert}
\address{Faculty of Mathematics\\ Nordbergstrasse 15\\ 1090 Wien\\ Austria}
\email{\mailto{katrin.grunert@univie.ac.at}}
\urladdr{\url{http://www.mat.univie.ac.at/~grunert/}}

\thanks{Research supported by the Austrian Science Fund (FWF) under Grant No.\ Y330.}

\keywords{Scattering theory, Schr\"odinger operator}
\subjclass[2010]{Primary 34L25; Secondary 34L40}

\begin{abstract}
We develop direct scattering theory for one--dimensional Schr\"odinger operators with steplike potentials, which are asymptotically close to different Bohr almost periodic infinite--gap potentials on different half--axes.
\end{abstract}

\maketitle

\section{Introduction}

One of the main tools for solving various Cauchy problems, since the seminal work of Gardner, Green, Kruskal, and Miura \cite{GGKM} in 1967, is the inverse scattering transform and therefore, since then, a large number of articles has been devoted to direct and inverse scattering theory. 

Given two (in general different) one-dimensional background Schr\"odinger operators $L_\pm$ with real finite-gap potentials $p_\pm(x)$, i.e.
\begin{equation}
 L_\pm=-\frac{d^2}{dx^2}+p_\pm(x), \quad x\in\R,
\end{equation}
one can consider the perturbed one-dimensional Schr\"odinger operator
\begin{equation}
 L=-\frac{d^2}{dx^2}+p(x),\quad x\in\R,
\end{equation}
where $p(x)$ satisfies a second moment condition, i.e.
\begin{equation}
 \pm\int_0^{\pm \infty} (1+x^2)\vert p(x)-p_\pm(x)\vert dx<\infty.
\end{equation}
Then one of the main tools when considering the scattering problem for the Schr\"odinger operator $L$, are the transformation operators which map the background Weyl solutions of the operators $L_\pm$ to the Jost solutions of $L$. In particular, if the background operators are well-understood, the transformation operators enable us to perform the direct scattering step, which means to characterize the scattering data and to derive the Gel'fand-Levitan-Marchenko equation. The starting point for the inverse scattering step is the Gel'fand-Levitan-Marchenko equation together with the scattering data, from which one deduces the kernels of the transformation operators and recovers the potential $p(x)$.

In much detail the scattering problem has been studied in the case where $p(x)$ is asymptotically close to $p_\pm(x)=0$. For a complete investigation and discussions on the history of this problem we refer to the monographs of Levitan \cite{L} and Marchenko \cite{M}. Taking this as a starting point, two natural extension have been considered. On the one hand the case of steplike constant asymptotics $p_\pm(x)=c_\pm$, where $c_+\not =c_-$ denote some constants, has been investigated by Buslaev and Fomin \cite{BF}, Cohen and Kappeler \cite{CK}, and Davies and Simon \cite{DS}. On the other hand Firsova \cite{F1} studied the case of equal periodic, finite-gap potentials $p_+(x)=p_-(x)$. Rather recently, the combination of these two cases, namely the case that the initial condition is asymptotically close to steplike, quasi-periodic, finite-gap potentials $p_-(x)\not =p_+(x)$, has been investigated by Boutet de Monvel, Egorova, and Teschl \cite{BET}. Trace formulas in the case of one periodic background were given by Mikikits-Leitner and Teschl \cite{MT} and a Paley--Wiener theorem in Egorova and Teschl \cite{ET}.

Of course the inverse scattering theory is also the main ingredient for solving the Cauchy problem of the Korteweg--de Vries (KdV) equation via the inverse scattering transform \cite{M}. Moreover, scattering theory is also the basic ingredient for setting up the associated Riemann--Hilbert problem from which the long-time asymptotics can be derived via the nonlinear steepest descent analysis (see \cite{GT} for an overview). In the case of finite-gap backgrounds the Cauchy problem was solved
by Grunert, Egorova, and Teschl \cite{GET}, \cite{ET3}. Note that the analogous Cauchy problem for the modified KdV equations can be obtained via the Miura transform \cite{ET2}. The long-time asymptotics in case of one finite-gap background were recently derived by Mikikits-Leitner and Teschl \cite{MT2}.

Of much interest is also the case of asymptotically periodic solutions, which has been first considered by Firsova \cite{F1}. In the present work we propose a complete investigation of the direct scattering theory for Bohr almost periodic infinite-gap backgrounds, which belong to the so--called Levitan class. It should be noticed, that this class, as a special case, includes the set of smooth, periodic infinite--gap operators. 

To set the stage, we need:

\begin{hypothesis}\label{Hyp}
Let
\beq\nn
0\leq E_0^\pm< E_1^\pm<  \dots  <E_n^\pm< \dots
\eeq 
be two increasing  sequences of points on the real axis which satisfy the following conditions:
\begin{enumerate}
\item
for a certain $l^\pm>1$, $\sum_{n=1}^\infty (E_{2n-1}^\pm)^{l^\pm} (E_{2n}^\pm-E_{2n-1}^\pm) <\infty$ and 
\item 
$E_{2n+1}^\pm-E_{2n-1}^\pm > C^\pm n^{\alpha^\pm}$, where $C^\pm$ and $\alpha^\pm$ are some fixed, positive constants.
\end{enumerate}
\end{hypothesis}

We will call, in what follows, the intervals $(E_{2j-1}^\pm,E_{2j}^\pm)$ for $j=1,2,\dots$ gaps. In each closed gap $[E_{2j-1}^\pm,E_{2j}^\pm]$ , $j=1,2,\dots$, we choose a point $\mu_j^\pm$ and an arbitrary sign $\sigma_j^\pm\in\{-1,1\}$.

Next consider the system of differential equations for the functions $\mu_j^\pm(x)$, $\sigma_j^\pm(x)$, $j=1,2,...$, which is an infinite analogue of the  well-known Dubrovin equations, given by 
\begin{align}\label{repmujx}
\frac{d \mu_j^\pm(x)}{d x}=
& -2\sigma_j^\pm(x)\sqrt{-(\mu_j^\pm(x)-E_0^\pm)}\sqrt{\mu_j^\pm(x)-E_{2j-1}^\pm}\sqrt{\mu_j^\pm(x)-E_{2j}^\pm}  \\ \nn 
&\times \prod_{k=1, k\not =j}^\infty  \frac{\sqrt{\mu_j^\pm(x)-E_{2k-1}^\pm}\sqrt{\mu_j^\pm(x)-E_{2k}^\pm}}{\mu_j^\pm(x)-\mu_k^\pm(x)}
\end{align}
with initial conditions $\mu_j^\pm(0)=\mu_j^\pm$ and $\sigma_j^\pm(0)=\sigma_j^\pm$, $j=1,2,\dots$
\footnote{We will use the standard branch cut of the square root in the domain $\mathbb C\setminus\mathbb R_+$ with $\Im\sqrt z>0$.}. Levitan \cite{L}, \cite{L2}, and \cite{L3}, proved, that
 this system of differential equations is uniquely solvable, that the solutions $\mu_j^\pm(x)$, $j=1,2,\dots$ are continuously differentiable and satisfy $\mu_j^\pm(x)\in [E_{2j-1}^\pm, E_{2j}^\pm]$ for all $x\in\R$. Moreover, these functions $\mu_j^\pm(x)$, $j=1,2,\dots$ are Bohr almost periodic
\footnote{
For informations about almost periodic functions we refer to \cite{LZ}.}.
Using the trace formula (see for example \cite{L})
\beq\label{defp}
p_\pm(x)=E_0^\pm+\sum_{j=1}^\infty (E_{2j-1}^\pm+E_{2j}^\pm-2\mu_j^\pm(x)),
\eeq
we see that also $p_\pm(x)$ are real Bohr almost periodic. The operators 
\begin{equation}\label{Lpm}
  L_\pm:= -\frac{d^2}{dx^2}+p_\pm(x),\quad \text{dom}(L_\pm)=H^2(\R),
\end{equation}
in $L^2(\R)$, are then called almost periodic infinite-gap Schr\"odinger operators of the Levitan class. The spectra of $L_\pm$ are purely absolutely continuous and of the form
\beq\nn
\sigma_\pm=[E_0^\pm,E_1^\pm]\cup \dots\cup [E_{2j}^\pm,E_{2j+1}^\pm] \cup \dots,
\eeq
and have spectral properties analogous to the quasi-periodic finite-gap Schr\"odinger operator.
In particular, they are completely defined by the series $\sum_{j=1}^\infty (\mu_j^\pm,\sigma_j^\pm)$, which we call the Dirichlet divisor. 
These divisors are associated to Riemann surfaces of infinite genus, which are connected with the functions $Y_\pm^{1/2}(z)$, where 
\beq\label{defY}
Y_\pm(z)=-(z-E_0^\pm)\prod_{j=1}^\infty  \frac{(z-E_{2j-1}^\pm)}{E_{2j-1}^\pm}\frac{(z-E_{2j}^\pm)}{E_{2j-1}^\pm},
\eeq
and where the branch cuts are taken along the spectrum. 
It is known, that the Schr\"odinger equations 
\beq\label{eq:Weyl}
\Big(-\frac{d^2}{dx^2}+p_\pm(x)\Big)y(x)=z y(x)
\eeq
with any continuous, bounded potential $p_\pm(x)$ have two Weyl solutions $\psi_\pm(z,x)$ and $\breve\psi_\pm(z,x)$,
which satisfy 
 \beq\nn
\psi_\pm(z,.)\in L^2(\R_\pm), \quad \text{ resp. } \quad \breve\psi_\pm(z,.)\in L^2(\R_\mp),
\eeq
for $z\in\C\backslash\sigma_\pm$ and which are normalized by $\psi_\pm(z,0)=\breve\psi_\pm(z,0)=1$.  
In our case of Bohr almost periodic potentials of the Levitan class, these solutions have complementary properties similar to the properties of the Baker-Akhiezer functions in the finite-gap case. We will briefly discuss them in the next section. 

The object of interest, for us, is the one-dimensional Schr\"odinger operator $L$ in $L^2(\R)$ 
\beq\label{defL}
L:=-\frac{d^2}{dx^2}+q(x), \quad \text{dom}(L)=H^2(\R),
\eeq
with the real potential $q(x)\in C(\R)$ satisfying the following condition
\beq\label{moment}
\pm\int_0^{\pm\infty} (1+\vert x\vert^2 ) \vert q(x)-p_\pm(x)\vert dx<\infty,
\eeq
for which we will characterize the corresponding scattering data and derive the Gel'fand-Levitan-Marchenko equation with the help of the transformation operator, which has been investigated in \cite{G}.

\section{The Weyl solutions of the background operators}

In this section we want to summarize some facts for the background Schr\"odinger operators $L_\pm$ of Levitan class. We present these results, obtained in \cite{G}, \cite{L}, \cite{SY}, and \cite {SY2}, in a form, similar to the finite-gap case used in \cite{BET} and \cite{GH}.  

Let $L_\pm$ be the quasi-periodic one-dimensional Schr\"odinger operators associated with the potentials $p_\pm(x)$.
Let $s_\pm(z,x)$, $c_\pm(z,x)$ be sine- and cosine-type solutions of the corresponding equation 
\beq\label{eq:Lpm}
\left(-\frac{d^2}{dx^2}+p_\pm(x)\right)y(x)=zy(x), \quad z\in\C,
\eeq
associated with the initial conditions
\beq\nn
s_\pm(z,0)=c^\prime_\pm(z,0)=0, \quad c_\pm(z,0)=s^\prime_\pm(z,0)=1,
\eeq
where prime denotes the derivative with respect to $x$.
Then $c_\pm(z,x)$, $c^\prime_\pm(z,x)$, $s_\pm(z,x)$, and $s^\prime_\pm(z,x)$ are entire with respect to $z$. Moreover, they can be represented in the following form 
\begin{subequations}
 \begin{align*}
  c_\pm(z,x)&=\cos(\sqrt{z}x)+\int_0^x\frac{\sin(\sqrt{z}(x-y))}{\sqrt{z}}p_\pm(y)c_\pm(z,y)dy,\\ 
  s_\pm(z,x)&=\frac{\sin(\sqrt{z}x)}{\sqrt{z}}+\int_0^x \frac{\sin(\sqrt{z}(x-y))}{\sqrt{z}}p_\pm(y)s_\pm(z,y)dy
 \end{align*}
\end{subequations}

The background Weyl solutions are given by 
\begin{subequations} 
\begin{align*}
\psi_\pm(z,x) &= c_\pm(z,x) + m_\pm(z,0) s_\pm(z,x), \\ 
 \text{resp. } 
\breve{\psi}_\pm(z,x) & = c_\pm(z,x) +\breve{m}_\pm(z,0) s_\pm(z,x),
\end{align*}
\end{subequations}
where 
\begin{equation*}
m_\pm(z,x)=\frac{H_\pm(z,x)\pm Y_\pm^{1/2}(z)}{G_\pm(z,x)}, \quad \breve m_\pm(z,x)=\frac{H_\pm(z,x)\mp Y_\pm^{1/2}(z)}{G_\pm(z,x)},
\end{equation*}
are the Weyl functions of $L_\pm$ (cf \cite{L}), where $Y_\pm(z)$ are defined by \eqref{defY},
\beq\label{repH}
G_\pm(z,x)=\prod_{j=1}^\infty\frac{z-\mu_j^\pm(x)}{E_{2j-1}^\pm}, \quad \text{ and } \quad 
H_\pm(z,x)=\frac{1}{2}\frac{d}{d x}G_\pm(z,x).
\eeq 
Using (\ref{repmujx}) and (\ref{repH}), we have 
\begin{equation}\label{repHG}
H_\pm(z,x)=\frac{1}{2}\frac{d}{d x}G_\pm(z,x)= G_\pm(z,x) \sum_{j=1}^\infty \frac{\sigma_j^\pm(x) Y_\pm^{1/2}(\mu_j^\pm(x))}{\frac{d}{dz} G_\pm(\mu_j^\pm(x),x)(z-\mu_j^\pm(x))}.
\end{equation}
The Weyl functions $m_\pm(z,x)$ and $\breve m_\pm(z,x)$ are Bohr almost periodic.

\begin{lemma}\label{lempsi}
The background Weyl solutions, for $z\in\C$, can be represented in the following form 
\begin{align}\label{intrep}
\psi_{\pm}(z,x) =\exp\left(\int_0^x m_{\pm}(z,y)dy\right)= \left( \frac{G_\pm(z,x)}{G_\pm(z,0)}\right)^{1/2}\exp\left( \pm \int_0^x \frac{Y_\pm^{1/2}(z)}{G_\pm(z,y)}dy \right),
\end{align}
and 
\begin{align*}
 \breve\psi_\pm(z,x)=\exp\left(\int_0^x \breve m_\pm(z,y)dy\right)=\left(\frac{G_\pm(z,x)}{G_\pm(z,0)}\right)^{1/2}\exp\left(\mp \int_0^x \frac{Y_\pm^{1/2}(z)}{G_\pm(z,y)}dy\right).
\end{align*}
If for some $\varepsilon>0$, $\vert z-\mu_j^\pm(x)\vert > \varepsilon$ for all $j\in\N$ and $x\in\R$, then the following holds:
For any $1>\delta>0$ there exists an $R>0$ such that 
\begin{equation*}
 \vert\psi_{\pm}(z,x)\vert \leq \E^{\mp(1-\delta)x\Im(\sqrt{z})}\Big(1+\frac{D_R}{\vert z\vert}\Big), \text{ for any } \vert z\vert \geq R,\quad \pm x>0,  
\end{equation*}
and 
\begin{equation*}
 \vert \breve\psi_\pm(z,x)\vert \leq \E^{\pm(1-\delta)x\Im(\sqrt{z})}\Big(1+\frac{D_R}{\vert z\vert}\Big), \text { for any } \vert z\vert \geq R,\quad \pm x<0,
\end{equation*}
where $D_R$ denotes some constant dependent on $R$. 
\end{lemma}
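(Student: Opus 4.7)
The plan is to prove the two integral representations first, and then extract the asymptotic bound from the second one by analysing the behaviour of the two factors as $|z|\to\infty$.

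\textbf{Integral representations.} By definition $m_\pm(z,\cdot)$ is the logarithmic derivative of the $L^2$ solution $\psi_\pm(z,\cdot)$, so $(\log\psi_\pm)'=m_\pm$; integrating from $0$ to $x$ together with $\psi_\pm(z,0)=1$ yields the first formula. For the second, split $m_\pm=H_\pm/G_\pm\pm Y_\pm^{1/2}/G_\pm$. The identity $H_\pm=\tfrac12\partial_xG_\pm$ implied by \eqref{repH} makes the first summand integrate in closed form to $\tfrac12\log(G_\pm(z,x)/G_\pm(z,0))$, while the second summand is left under the integral. Exponentiation produces the claimed formula, and the statement for $\breve\psi_\pm$ is identical with the sign in front of $Y_\pm^{1/2}/G_\pm$ reversed.

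\textbf{Asymptotic bound.} Regrouping the products defining $Y_\pm$ and $G_\pm$, rewrite
\beq\nn
\frac{Y_\pm^{1/2}(z)}{G_\pm(z,y)} = \sqrt{-(z-E_0^\pm)}\prod_{j=1}^\infty \frac{\sqrt{(z-E_{2j-1}^\pm)(z-E_{2j}^\pm)}}{z-\mu_j^\pm(y)}.
\eeq
Since $\mu_j^\pm(y)\in[E_{2j-1}^\pm,E_{2j}^\pm]$, each factor in the infinite product equals $1+O((E_{2j}^\pm-E_{2j-1}^\pm)/|z-\mu_j^\pm(y)|)$ for $|z|$ large; under Hypothesis H these corrections are absolutely summable. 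The separation assumption $|z-\mu_j^\pm(y)|>\varepsilon$ excludes poles and the Bohr almost periodicity of $y\mapsto\mu_j^\pm(y)$ upgrades the estimate to one uniform in $y$. Combined with $\sqrt{-(z-E_0^\pm)}=i\sqrt{z}(1+O(1/|z|))$ under the chosen branch, this gives $Y_\pm^{1/2}(z)/G_\pm(z,y)=i\sqrt{z}(1+o(1))$ uniformly in $y$ as $|z|\to\infty$. A parallel computation shows $|G_\pm(z,x)/G_\pm(z,0)|^{1/2}=1+O(1/|z|)$ uniformly in $x$. Inserting both asymptotics into the second representation and taking real parts yields, for $\pm x>0$,
\beq\nn
|\psi_\pm(z,x)| \leq (1+O(1/|z|))\,\exp\bigl(\mp x\Im\sqrt{z} + x|\sqrt{z}|\cdot o(1)\bigr).
\eeq
For $|z|\geq R$ with $R=R(\delta)$ large, the $o(1)$ factor can be forced below $\delta$, so that the correction $x|\sqrt{z}|\cdot o(1)$ is absorbed into the exponent as $\delta\cdot x\Im\sqrt{z}$ (together with a remainder that contributes to $D_R/|z|$). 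This produces the exponent $\mp(1-\delta)x\Im\sqrt{z}$ and the prefactor $1+D_R/|z|$. The bound on $\breve\psi_\pm$ follows by flipping the sign in front of $Y_\pm^{1/2}/G_\pm$ and noting that $\mp x>0$ in this case.

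\textbf{Main obstacle.} The delicate point is the uniform-in-$y$ asymptotic analysis of the Weierstrass-type products defining $G_\pm$ and $Y_\pm^{1/2}$: their convergence is conditional and relies on pairwise cancellation between the numerator pairs $(z-E_{2j-1}^\pm)(z-E_{2j}^\pm)$ and the denominator factors $(z-\mu_j^\pm(y))^2$. Only the spectral gap condition of Hypothesis H combined with Bohr almost periodicity of the divisor motion makes the remainder estimates uniform in $y\in\R$; the required infinite-gap asymptotic technology is exactly that developed in \cite{L}, \cite{SY}, \cite{SY2}, and \cite{G}.
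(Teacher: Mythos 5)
The paper does not prove this lemma itself: it is stated as a summary of results from \cite{G}, \cite{L}, \cite{SY}, \cite{SY2}, so there is no in-paper argument to compare against. Your derivation of the two integral representations is correct and is the standard route: $\psi_\pm'/\psi_\pm=m_\pm$, the splitting $m_\pm=H_\pm/G_\pm\pm Y_\pm^{1/2}/G_\pm$, and $H_\pm=\tfrac12\partial_xG_\pm$ give exactly \eqref{intrep}. Your product manipulation of $Y_\pm^{1/2}/G_\pm$ and the factorwise estimate $1+O\bigl((E_{2j}^\pm-E_{2j-1}^\pm)/|z-\mu_j^\pm(y)|\bigr)$ are also correct (uniformity in $y$ needs only $\mu_j^\pm(y)\in[E_{2j-1}^\pm,E_{2j}^\pm]$ and the separation hypothesis, not Bohr almost periodicity).

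The genuine gap is the absorption step. You write $Y_\pm^{1/2}(z)/G_\pm(z,y)=\I\sqrt z\,(1+o(1))$ and then claim the correction $x|\sqrt z|\cdot o(1)$ in the exponent "can be absorbed into $\delta\cdot x\Im\sqrt z$" by taking $R$ large. That inequality, $|\sqrt z|\cdot o(1)\le\delta\,\Im\sqrt z$, fails whenever $z$ approaches the positive real axis, since $\Im\sqrt z/|\sqrt z|\to0$ there while the hypothesis $|z-\mu_j^\pm(x)|>\varepsilon$ does not keep $z$ away from the bands. A crude modulus bound on the $o(1)$ term therefore cannot yield the exponent $\mp(1-\delta)x\Im\sqrt z$ near the spectrum. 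What saves the statement is structural: writing $Y_\pm^{1/2}(z)/G_\pm(z,y)=\I\sqrt z\,(1+\epsilon(z,y))$, one has $\Re\bigl(\I\sqrt z\,\epsilon\bigr)=-\Im\sqrt z\,\Re\epsilon-\Re\sqrt z\,\Im\epsilon$; the first term is harmless, and for the second one must use that $\epsilon$ is \emph{real} on the bands (there $Y_\pm^{1/2}/G_\pm$ is purely imaginary), so that $\Im\epsilon$ carries a factor of $\Im z$ and $\Re\sqrt z\,\Im\epsilon$ is again proportional to $\Im\sqrt z$, with a constant that tends to $0$ as $|z|\to\infty$ only because Hypothesis H gives $E_{2j}^\pm-E_{2j-1}^\pm=O\bigl((E_{2j-1}^\pm)^{-l^\pm}\bigr)$ with $l^\pm>1$. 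One must also observe that for $|z|\ge R$ large the separation hypothesis automatically excludes real $z$ lying inside a gap (the gaps are shorter than $\varepsilon$ there), since in the gaps $Y_\pm^{1/2}/G_\pm$ has a genuinely nonzero real part and the claimed bound would otherwise fail. None of this is in your sketch, and without it the final estimate is not established.
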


As the spectra $\sigma_\pm$ consist of infinitely many bands, let us cut the complex plane along the spectrum $\sigma_\pm$ and
denote the upper and lower sides of the cuts by $\sipmu$ and
$\sipml$. The corresponding points on these cuts will be denoted by
$\lau$ and $\lal$, respectively. In particular, this means
\[
f(\lau) := \lim_{\varepsilon\downarrow0} f(\lambda+\I\varepsilon),
\qquad f(\lal) := \lim_{\varepsilon\downarrow0}
f(\lambda-\I\varepsilon), \qquad \lambda\in\sigma_\pm.
\]
Defining 
\begin{equation}\label{1.88}
g_\pm(\la)= -\frac{G_\pm(\la,0)}{2
Y_\pm^{1/2}(\la)},
\end{equation}
where the branch of the square root is chosen in such a way that
\begin{equation}\label{1.8}
\frac{1}{\I} g_\pm(\lau) = \Im(g_\pm(\lau))  >0 \quad
\mbox{for}\quad \lambda\in\sigma_\pm,
\end{equation}
it follows from Lemma~\ref{lempsi} that
\beq\label{1.62}
W(\breve\psi_\pm(z), \psi_\pm(z))=m_\pm(z)-\breve m_\pm(z)=\mp g_\pm(z)^{-1},
\eeq
where $W(f,g)(x)=f(x)g^\prime(x)-f^\prime(x)g(x)$ denotes the usual Wronskian determinant.

For every Dirichlet eigenvalue $\mu_j^\pm=\mu_j^\pm(0)$, the Weyl functions $m_\pm(z)$ and $\breve m_\pm(z)$ might have poles. 
If $\mu^\pm_j$ is in the interior of its gap, precisely one Weyl function $m_\pm$ or $\breve m_\pm$
will have a simple pole. Otherwise, if $\mu^\pm_j$ sits at an edge, both will have
a square root singularity. Hence we divide the set of poles accordingly:
\begin{align*}
M_\pm &=\{ \mu^\pm_j\mid\mu^\pm_j \in (E_{2j-1}^\pm,E_{2j}^\pm) \text{ and } m_\pm \text{ has a simple pole}\},\\
\breve M_\pm &=\{ \mu^\pm_j\mid\mu^\pm_j \in (E_{2j-1}^\pm,E_{2j}^\pm) \text{ and } \breve m_\pm \text{ has a simple pole}\},\\
\hat M_\pm &=\{ \mu^\pm_j\mid\mu^\pm_j \in \{E_{2j-1}^\pm,E_{2j}^\pm\} \},
\end{align*}
and we set $M_{r,\pm}=M_\pm\cup \breve M_\pm \cup\hat M_\pm$.

In particular, we obtain the following properties of the Weyl solutions (see, e.g. \cite{CL}, \cite{G}, \cite{L}, \cite{Te}, and \cite{T}):

\begin{lemma}\label{lem1.1}
The Weyl solutions have the following properties:
\begin{enumerate}[\rm(i)]
\item
The function $\psi_{\pm}(z,x)$ (resp. $\breve\psi_\pm(z,x)$) is holomorphic
as a function of $z$ in the domain $\mathbb{C}\setminus (\sigma_\pm\cup M_{\pm})$ (resp. $\C\backslash(\sigma_\pm\cup\breve M_\pm)$),
real valued on the set $\mathbb{R}\setminus \sigma_\pm$, and have
simple poles at the points of the set $M_{\pm}$ (resp. $\breve M_\pm$).
Moreover, they are  continuous up to the boundary $\sigma_\pm^u\cup \sigma_\pm^l$ except at the points from $\hat M_\pm$ and
\begin{equation}\label{1.10}
\psi_\pm(\lau) =\breve \psi_\pm(\lal) =\overline{\psi_\pm(\lal)},
\quad \lambda\in\sigma_\pm.
\end{equation}
For $E \in \hat M_\pm$ the Weyl solutions satisfy
\[
\psi_{\pm}(z,x)=O\left(\frac{1}{\sqrt{z-E}}\right), \quad \breve\psi_{\pm}(z,x)=O\left(\frac{1}{\sqrt{z-E}}\right),\quad
\mbox{as } z\to E\in \hat M_\pm,
\]
where the $O((z-E)^{-1/2})$-term is independent of $x$.\\
\noindent
The same applies to $\psi'_{\pm}(z,x)$ and $\breve\psi ^\prime_{\pm}(z,x)$.
\item
At the edges of the spectrum the Weyl solutions satisfy 
\beq\nn
\lim_{z\to E}\psi_\pm(z,x)-\breve\psi_\pm(z,x)=0 \quad \text{ for }\quad E\in\partial\sigma_\pm\backslash\hat M_\pm,
\eeq 
and 
\beq\nn
\psi_\pm(z,x)+\breve\psi_\pm(z,x)=O(1) \quad \text{ for } \quad z \text{ near }E\in \hat M_\pm,
\eeq 
where the $O(1)$-term depends on $x$.
\item
The functions $\psi_{\pm}(z,x)$ and $\breve\psi_{\pm}(z,x)$ form an orthonormal basis on the spectrum with respect to the weight 
\begin{equation}\label{1.12}
d\rho_\pm(z)=\frac{1}{2\pi\I}g_\pm(z) dz,
\end{equation}
and any $f(x)\in L^2(\R)$ can be expressed through
\beq\label{1.14}
f(x)=\oint_{\sigma_\pm} \left(\int_\R f(y)\psi_\pm(z,y)dy\right)\breve\psi_\pm(z,x)d\rho(z).
\eeq
Here we use the notation 
\beq\nn
\oint_{\sigma_\pm}f(z)d\rho_\pm(z) := \int_{\sigma^u_\pm} f(z)d\rho_\pm(z)
- \int_{\sigma^l_\pm} f(z)d\rho_\pm(z).
\eeq
\end{enumerate}
\end{lemma}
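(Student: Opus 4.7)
The plan is to deduce each part from the explicit representations $\psi_\pm = c_\pm + m_\pm s_\pm$ and $\breve\psi_\pm = c_\pm + \breve m_\pm s_\pm$ together with the factorizations of $m_\pm,\breve m_\pm$ in terms of $G_\pm$, $H_\pm$, and $Y_\pm^{1/2}$.

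For part (i), since $c_\pm$ and $s_\pm$ are entire in $z$, the analytic structure of $\psi_\pm$ is inherited from $m_\pm(z,0)$. The zeros of $G_\pm(z,0)$ occur exactly at the Dirichlet data $\mu_j^\pm$; in the interior of a gap $Y_\pm^{1/2}(\mu_j^\pm) \neq 0$, so precisely one of $H_\pm(\mu_j^\pm,0) \pm Y_\pm^{1/2}(\mu_j^\pm)$ vanishes, selecting whether the simple pole sits in $m_\pm$ or $\breve m_\pm$ and producing the decomposition $M_\pm, \breve M_\pm$. At a gap edge $\mu_j^\pm \in \hat M_\pm$ the square-root zero of $Y_\pm^{1/2}$ combined with the simple zero of $G_\pm(z,0)$ produces the $O((z-E)^{-1/2})$ behavior, carried by the $z$-dependent factor $m_\pm(z,0)$ uniformly in $x$ on compacts. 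Real-valuedness on $\R\setminus\sigma_\pm$ follows since $G_\pm,H_\pm$ are real there and the branch convention \eqref{1.8} renders $Y_\pm^{1/2}$ real on the gaps. On $\sigma_\pm$, $Y_\pm^{1/2}(\lau) = -Y_\pm^{1/2}(\lal)$ is purely imaginary, which immediately yields the conjugation relation \eqref{1.10}.

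Part (ii) follows from the Wronskian identity \eqref{1.62} together with \eqref{repHG}. For $E \in \partial\sigma_\pm \setminus \hat M_\pm$ one has $Y_\pm^{1/2}(E) = 0$ while $G_\pm(E,0) \neq 0$, so $g_\pm(\lambda) \to \infty$ as $\lambda \to E$; therefore $m_\pm - \breve m_\pm = \mp g_\pm^{-1} \to 0$, whence $\psi_\pm - \breve\psi_\pm = (m_\pm - \breve m_\pm)s_\pm \to 0$. For $E \in \hat M_\pm$ with $\mu_j^\pm(0) = E$, writing $m_\pm + \breve m_\pm = 2H_\pm(z,0)/G_\pm(z,0)$ and inserting \eqref{repHG} cancels the overall factor $G_\pm(z,0)$; the $k=j$ summand vanishes identically because $Y_\pm^{1/2}(\mu_j^\pm(0)) = 0$, while the remaining summands are analytic at $E$. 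Hence $m_\pm + \breve m_\pm$ stays bounded, and $\psi_\pm + \breve\psi_\pm = 2c_\pm + (m_\pm+\breve m_\pm)s_\pm$ is $O(1)$ with the bound depending on $x$ through $c_\pm(E,x)$ and $s_\pm(E,x)$.

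Part (iii) will be obtained by a Weyl--Titchmarsh spectral decomposition. The Wronskian normalization \eqref{1.62} gives the resolvent kernel of $L_\pm$ as $\mp g_\pm(z)\,\breve\psi_\pm(z,\min(x,y))\,\psi_\pm(z,\max(x,y))$ for $z \in \C \setminus \sigma_\pm$. Applying Stone's formula, the jump of the resolvent across $\sipmu \cup \sipml$ produces the spectral measure; combining the boundary values via \eqref{1.10} pairs $\psi_\pm$ with $\breve\psi_\pm$ and delivers the weight $d\rho_\pm(z) = (2\pi\I)^{-1}g_\pm(z)dz$, yielding \eqref{1.14}. The main obstacle is analytic rather than algebraic: one must verify that the band-by-band expansion converges in $L^2(\R)$ and reproduces $f$, which relies on the decay estimates of Lemma \ref{lempsi}, on the integrability of the $(z-E)^{-1/2}$ singularities at the exceptional points in $\hat M_\pm$, and on the summability of contributions across the infinitely many bands guaranteed by Hypothesis \ref{Hyp}.
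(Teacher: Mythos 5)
Your overall strategy is sound, and for the one part the paper actually proves in detail --- part (ii) --- you take a genuinely different route. The paper disposes of (i) and (iii) by citing an external reference (Lemma~2.2 of \cite{G}), so your sketches there (reading the analytic structure off $m_\pm(z,0)$, and Stone's formula for the expansion) are parallel outlines of what that reference does rather than competing proofs; note that for (iii) the real work, which you correctly identify but do not carry out, is the summability of the expansion over infinitely many bands. For (ii) the paper works with the exponential representation \eqref{intrep}, writes $\psi-\breve\psi$ as $2\I\left(G(z,x)/G(z,0)\right)^{1/2}\sin\left(\int_0^x Y^{1/2}/G\,d\tau\right)$ or as $2\left(\cdot\right)^{1/2}\cos\left(\cdot\right)$ according to whether $\mu_j(0)=E$, and in the hardest subcase ($\mu_j(0)=E$, $\mu_j(x)\neq E$) must split $[0,x]$ into monotonicity intervals of $\mu_j$ and expand $\arctan$ to extract a compensating $O(\sqrt{z-E})$ from the cosine. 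Your argument replaces all of this by the algebraic identities $\psi_\pm-\breve\psi_\pm=(m_\pm-\breve m_\pm)s_\pm=\mp g_\pm^{-1}s_\pm$ and $\psi_\pm+\breve\psi_\pm=2c_\pm+2\bigl(H_\pm(z,0)/G_\pm(z,0)\bigr)s_\pm$, with the $k=j$ term of \eqref{repHG} killed by $Y_\pm^{1/2}(E)=0$; this is shorter, avoids the case distinctions on $\mu_j(x)$ entirely (the $x$-dependence is absorbed into the entire functions $c_\pm$, $s_\pm$), and is correct. The one point you should make explicit is that the series in \eqref{repHG} (at $x=0$, with the $j$-th term removed) converges locally uniformly near $E$, so that $H_\pm(z,0)/G_\pm(z,0)$ really is bounded there; this is a statement about the Levitan class and not automatic term by term, though it is of the same nature as the convergence facts the paper itself uses without comment.
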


\begin{proof}
For a proof of (i) and (iii) we refer to \cite[Lemma 2.2]{G}.\\
(ii) We only prove the claim for the $+$ case (the $-$ case can be handled in the same way) and drop the $+$ in what follows. In \cite[Lemma 2.2]{G} we showed that 
\beq\label{intev}
\lim_{z\to E}\exp\left(\int_0^x \frac{Y^{1/2}(z)}{G(z,\tau)}d\tau\right)=\begin{cases}
 \pm 1, & \mu_j(0)\not =E, \mu_j(x)\not=E, \\
\pm 1, & \mu_j(0)=E, \mu_j(x)=E, \\
\pm\I, & \mu_j(0)=E, \mu_j(x)\not=E, \\
\pm\I, & \mu_j(0)\not=E, \mu_j(x)=E, \end{cases}
\eeq
for any $E\in\partial \sigma$.\\
Thus assuming that $\mu_j(0)\not =E$, we can write along the spectrum 
\begin{equation*}
 \psi(z,x)-\breve\psi(z,x)=2i\left(\frac{G(z,x)}{G(z,0)}\right)^{1/2}\sin\left(\int_0^x \frac{Y^{1/2}(z)}{G(z,\tau)}d\tau\right). 
\end{equation*}
\begin{enumerate}
 \item 
If in addition $\mu_j(x)\not= E$, then by \eqref{intev} we have $\lim_{z\to E}\sin\left(\int_0^x \frac{Y^{1/2}(z)}{G(z,\tau)}d\tau\right)=0$ and $\lim_{z\to E}\frac{G(z,x)}{G(z,0)}$ exists. Thus we end up with 
\begin{equation*}
 \lim_{z\to E} (\psi(z,x)-\breve\psi(z,x)) =0.
\end{equation*}
\item
If $\mu_j(x)=E$, then by \eqref{intev} we get $\lim_{z\to E}\sin\left(\int_0^x \frac{Y^{1/2}(z)}{G(z,\tau)}d\tau\right)=\pm 1$ and $\lim_{z\to E} \frac{G(z,x)}{G(z,0)}=0$. Hence
\begin{equation*}
 \lim_{z\to E}(\psi(z,x)-\breve\psi(z,x))=0.
\end{equation*}
\end{enumerate}

To prove the second claim, assume that $\mu_j(0)=E$ and write 
\begin{equation}\label{eq:lem22}
 \psi(z,x)-\breve\psi(z,x)=2\left(\frac{G(z,x)}{G(z,0)}\right)^{1/2}\cos\left(\int_0^x \frac{Y^{1/2}(z)}{G(z,\tau)}d\tau \right).
\end{equation}
\begin{enumerate}
 \item 
If $\mu_j(x)=E$, then by \eqref{intev} we get $\lim_{z\to E}\cos\left(\int_0^x\frac{Y^{1/2}(z)}{G(z,\tau)}d\tau\right)=\pm 1$ and $\lim_{z\to E} \frac{G(z,x)}{G(z,0)}$ exists. Therefore $\lim_{z\to E}(\psi(z,x)-\breve\psi(z,x))$ exists and especially 
\begin{equation*}
 \psi(z,x)-\breve\psi(z,x)=O(1) \quad \text{ for }\quad z\text{ near } E=\mu_j(0).
\end{equation*}
\item 
If $\mu_j(x)\not =E$, we cannot conclude as before, because $\lim_{z\to E}\frac{G(z,x)}{G(z,0)}$ does not exist. 
Assume that $E=E_{2j}$ (the case $E=E_{2j-1}$ can be handled in a similar way). Then we can seperate for fixed $x\in\R$ the interval $[0,x]$ into smaller intervals $[x_0,x_1]\cup [x_1,x_2]\cup \dots\cup [x_{2l},x]$ such that $x_0=0$, $\mu_j(x_k)\in\{E_{2j-1}, E_{2j}\}$ for $k=0,1,2, \dots ,2l$, $\mu_j(x)$ is monotone increading or decreasing on every interval $[x_k, x_{k+1}]$ and $\mu_j(x_0)=E_{2j}=\mu_j(x_{2l})$. Folowing the proof of \cite[Lemma 2.2 (i)]{G}, one obtains for $E=E_{2j}$ that 
\begin{align*}
 \int_0^x \frac{Y^{1/2}(z)}{G(z,\tau)}d\tau& =\I\sigma_j2(l+1) \arctan\left(\frac{\sqrt{E_{2j}-E_{2j-1}}}{\sqrt{z-E_{2j}}}\right)\\ \nn 
&\quad +\I\sigma_j\arctan\left(\frac{\sqrt{E_{2j}-\mu_j(x)}}{\sqrt{z-E_{2j}}}\right)+\I O(\sqrt{z-E_{2j}}), 
\end{align*}
where the $O(\sqrt{z-E_{2j}})$ term depends on $x$.
Using now that $\arctan (x)=\frac{\pi}{2}+O\left(\frac{1}{x}\right)$ for $x\to\infty$, we have 
\begin{equation*}
 \arctan\left(\frac{\sqrt{E_{2j}-E_{2j-1}}}{\sqrt{z-E_{2j}}}\right)=\frac{\pi}{2}+O\left(\frac{\sqrt{z-E_{2j}}}{\sqrt{E_{2j}-E_{2j-1}}}\right),
\end{equation*}
and 
\begin{equation*}
 \arctan\left(\frac{\sqrt{E_{2j}-\mu_j(x)}}{\sqrt{z-E_{2j}}}\right)=\frac{\pi}{2}+O\left(\frac{\sqrt{z-E_{2j}}}{\sqrt{E_{2j}-\mu_j(x)}}\right), 
\end{equation*}
which gives
\begin{equation*}
\cos\left(\int_0^x \frac{Y^{1/2}(z)}{G(z,\tau)}d\tau \right)=O(\sqrt{z-E_{2j}}),
\end{equation*}
where the $O(\sqrt{z-E_{2j}})$ term depends on $x$.
Plugging this into \eqref{eq:lem22} yields
\begin{equation*}
 \psi(z,x)-\breve\psi(z,x)=O(1)\quad \text{ for } \quad z\text{ near } E=\mu_j(0),
\end{equation*}
where the $O(1)$-term depends on $x$.
\end{enumerate}
\end{proof}

\section{The direct scattering problem}

Consider the Schr\"odinger equation 
\beq\label{eq:L}
\left(-\frac{d^2}{dx^2}+q(x)\right) y(x)=zy(x), \quad z\in\C,
\eeq
with a potential $q(x)$ satisfying the following condition 
\beq\label{decay}
\pm\int_0^{\pm\infty} (1+x^2)\vert q(x)-p_\pm(x)\vert dx <\infty.
\eeq
Then there exist two solutions, the so-called Jost solutions $\phi_\pm(z,x)$, which are asymptotically close to the background Weyl solutions $\psi_\pm(z,x)$ of equation (\ref{eq:Lpm}) as $x\to\pm\infty$ and they can be represented as
\beq\label{repphi}
 \phi_\pm(z,x)=\psi_\pm(z,x)\pm\int_x^{\pm\infty} K_\pm(x,y)\psi_\pm(z,y)dy.
\eeq
Here $K_\pm(x,y)$ are real-valued  functions, which are continuously differentiable with respect to both parameters and satisfy the estimate
\beq\label{estK}
\vert K_\pm(x,y) \vert \leq C_\pm(x)Q_\pm(x+y)=\pm C_\pm(x)\int_{\frac{x+y}{2}}^{\pm\infty}\vert q(t)-p_\pm(t)\vert dt,
\eeq
where $C_\pm(x)$ are continuous, positive, monotonically decreasing functions, and therefore bounded as $x\to\pm\infty$.
Furthermore,
\beq\label{estpK}
\left\vert\frac{d K_\pm(x,y)}{dx}\right\vert+\left\vert \frac{d K_\pm(x,y)}{dy}\right\vert \leq C_\pm(x)\left(\left\vert q_\pm\left(\frac{x+y}{2}\right)\right\vert +Q_\pm(x+y)\right)
\eeq
and 
\beq\label{Kxx}
\pm\int_a^{\pm\infty}(1+x^2)\left\vert \frac{d}{dx}K_\pm(x,x)\right\vert dx<\infty, \quad \forall a\in\R.
\eeq
For more information we refer to \cite{G}.

Moreover, for $\lambda \in\sigma_\pm^u\cup\sigma_\pm^l$ a second pair of solutions of (\ref{eq:L}) is given by
\beq\label{repolphi}
\overline{\phi_\pm(\la,x)}=\breve\psi_\pm(\la,x)\pm\int_x^{\pm\infty}K_\pm(x,y)\breve\psi_\pm(\la,y)dy, \quad \la\in\sigma_\pm^u\cup\sigma_\pm^l.
\eeq
Note $\breve\psi_\pm(\la,x)=\overline{\psi_\pm(\la,x)}$ for $\la\in\sigma_\pm$.

Unlike the Jost solutions $\phi_\pm(z,x)$, these solutions only exist on the upper and lower cuts of the spectrum and cannot be continued to the whole complex plane. Combining (\ref{1.62}), (\ref{repphi}), (\ref{estK}), and (\ref{repolphi}), one obtains 
\beq\label{Wpop}
W(\phi_\pm(\la),\overline{\phi_\pm(\la)})=\pm g(\la)^{-1}.
\eeq

In the next lemma we want to point out, which properties of the background Weyl solutions are also inherited by the Jost solutions.

\begin{lemma}\label{propphi}
The Jost solutions $\phi_\pm(z,x)$ have the following properties:
\begin{enumerate}
\item
The function $\phi_\pm(z,x)$ considered as a function of $z$, is holomorphic in the domain $\C\backslash(\sigma_\pm \cup M_\pm)$, and has simple poles at the points of the set $M_\pm$. It is continuous up to the boundary $\sigma_\pm^u \cup\sigma_\pm^l$ except at the points from $\hat{M}_\pm$. Moreover, we have 
\beq\nn
\phi_\pm(z,x)\in L^2(\R_\pm), \quad z\in\C\backslash\sigma_\pm
\eeq
For $E\in \hat{M}_\pm$ they satisfy 
\beq\nn
\phi_\pm(z,x)=O\left(\frac{1}{\sqrt{z-E}}\right), \quad \text{ as } z\to E\in\hat{M}_\pm,
\eeq
where the $O((z-E)^{-1/2})$-term depends on $x$.
\item
At the band edges of the spectrum we have the following behavior:
\begin{equation*}
\lim_{z\to E}\phi_\pm(z,x)-\overline{\phi_\pm(z,x)}=0\quad \text{ for }  \quad E\in\partial\sigma_\pm\backslash\hat{M}_\pm,
\end{equation*} 
and 
\begin{equation*}
\phi_\pm(z,x)+\overline{\phi_\pm(z,x)}=O(1) \quad  \text{ for } \quad z \text{ near }E\in\hat{M}_\pm, 
\end{equation*}
where the $O(1)$-term depends on $x$.
\end{enumerate}
\end{lemma}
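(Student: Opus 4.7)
The overall strategy is to push the properties of the background Weyl solutions $\psi_\pm$, $\breve\psi_\pm$ collected in Lemmas \ref{lempsi} and \ref{lem1.1} through the integral representations (\ref{repphi}) and (\ref{repolphi}). Since the kernel $K_\pm(x,\cdot)$ is real, independent of $z$, and satisfies the decay estimate (\ref{estK}), on any compact subset $\Omega$ of the domain in which $\psi_\pm(z,y)$ and $\breve\psi_\pm(z,y)$ are well defined, the pointwise bound of Lemma \ref{lempsi} combined with the integrability of $K_\pm(x,\cdot)$ on $[x,\pm\infty)$ produces an $L^1$-majorant independent of $z\in\Omega$. This enables differentiation, limit-taking, and uniform bounding under the integral sign.

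For part (i), holomorphy of $\phi_\pm(z,x)$ on $\C\setminus(\sigma_\pm\cup M_\pm)$ and continuity up to $\sigma_\pm^u\cup\sigma_\pm^l$ away from $\hat M_\pm$ follow from Lemma \ref{lem1.1}(i) via Morera/dominated convergence under the integral in (\ref{repphi}). Simple poles on $M_\pm$ are inherited pointwise from $\psi_\pm(z,y)$. The statement $\phi_\pm(z,\cdot)\in L^2(\R_\pm)$ uses $\psi_\pm(z,\cdot)\in L^2(\R_\pm)$ by definition of the Weyl solution, plus (\ref{estK}) applied to the integral term. At $E\in\hat M_\pm$, the key input from Lemma \ref{lem1.1}(i) is the factorization $\psi_\pm(z,y)=(z-E)^{-1/2}h_\pm(z,y)$ with $h_\pm$ bounded uniformly in $y$; the singular prefactor can then be pulled outside the integral, giving the stated $(z-E)^{-1/2}$-behavior, with a coefficient that now depends on $x$ through the integration.

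For part (ii), subtracting and adding (\ref{repolphi}) from/to (\ref{repphi}) yields
\begin{equation*}
\phi_\pm(z,x)\mp\overline{\phi_\pm(z,x)} \;=\; \bigl(\psi_\pm(z,x)\mp\breve\psi_\pm(z,x)\bigr)\pm\int_x^{\pm\infty}K_\pm(x,y)\bigl(\psi_\pm(z,y)\mp\breve\psi_\pm(z,y)\bigr)\,dy.
\end{equation*}
For $E\in\partial\sigma_\pm\setminus\hat M_\pm$, Lemma \ref{lem1.1}(ii) forces the boundary term to vanish as $z\to E$ and makes the integrand tend to $0$ pointwise in $y$; dominated convergence with the majorant from the first paragraph then gives $\phi_\pm-\overline{\phi_\pm}\to 0$. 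For $E\in\hat M_\pm$, Lemma \ref{lem1.1}(ii) yields $\psi_\pm+\breve\psi_\pm=O(1)$ near $E$, so the boundary term is already $O(1)$ and the analogous uniform bound controls the integral term.

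The main obstacle is the $\hat M_\pm$-case of part (ii): the $O(1)$-constant in Lemma \ref{lem1.1}(ii) depends on its spatial argument, so one must produce a version of the bound on $\psi_\pm(z,y)+\breve\psi_\pm(z,y)$ that is integrable in $y$ against $K_\pm(x,y)$ on $[x,\pm\infty)$. Tracing through the explicit representation (\ref{intrep}) used in the proof of Lemma \ref{lem1.1}(ii) shows that the $y$-dependence of that constant grows at most polynomially in $y$ along the integration region, which is absorbed by the second-moment decay of $K_\pm(x,\cdot)$ coming from (\ref{estK}) together with (\ref{decay}).
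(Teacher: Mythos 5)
Your proposal is correct and follows essentially the same route as the paper, whose entire proof is the one-sentence observation that the listed properties concern only the $z$-dependence and are therefore preserved by the transformation operator acting in the spatial variable. You simply supply the details the paper leaves implicit (dominated convergence against the majorant from \eqref{estK}, and the control of the $y$-dependence of the $O(1)$-bound at points of $\hat M_\pm$), and your handling of that last point is consistent with the estimates appearing in the proof of Lemma~\ref{lem1.1}(ii).
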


\begin{proof}
Everything follows from the fact that these properties are only dependent on $z$ and therefore the transformation operator does not influence them.
\end{proof}

Now we want to characterize the spectrum of our operator $L$, which consists of an (absolutely) continuous part, $\sigma=\sigma_+\cup\sigma_-$ and an at most countable number of discrete eigenvalues, which are situated in the gaps, $\sigma_d\subset\R\backslash\sigma$. In particular every gap can only contain a finite number of discrete eigenvalues (cf. \cite{kr}, \cite{krt1}, and \cite[Thm. 6.12]{RK}) and thus they cannot cluster. For our purposes it will be convenient to write 
\begin{equation*}
\sigma=\sigma_-^{(1)}\cup\sigma_+^{(1)}\cup\sigma^{(2)},
\end{equation*}
with
\begin{equation*}
 \sigma^{(2)}:=\sigma_-\cap\sigma_+, \quad \sigma_\pm^{(1)}=\clos(\sigma_\pm\backslash\sigma^{(2)}).
\end{equation*}

It is well-known that a point $\lambda\in\R\backslash\sigma$ corresponds to the discrete spectrum if and only if the two Jost solutions are linearly dependent, which implies that we should investigate 
\beq
W(z):=W(\phi_-(z,.), \phi_+(z,.)),
\eeq
the Wronskian of the Jost solutions.
This is a meromorphic function in the domain $\C\backslash\sigma$, with possible poles at the points $M_+\cup M_- \cup (\hat{M}_+\cap \hat{M}_-)$ and possible square root singularities at the points $(\hat{M}_+\cup \hat{M}_-)\backslash (\hat{M}_+\cap\hat{M}_-)$.
For investigating the function $W(z)$ in more detail, we will multiply the possible poles and square root singularities away.
Thus we define locally in a small neighborhood $U_j^\pm$ of the j'th gap $[E_{2j-1}^\pm,E_{2j}^\pm]$, where $j=1,2,\dots$ 
\beq
\tilde{\phi}_{j,\pm}(z,x)=\delta_{j,\pm}(z)\phi_\pm(z,x),
\eeq
where
\begin{equation}
\delta_{j,\pm}(z)=\begin{cases}
 z-\mu_j^\pm, & \text{ if } \mu_j^\pm\in M_\pm,\\ 
 1, & \text{else}\end{cases}
\end{equation}
and 
\beq
\hat{\phi}_{j,\pm}(z,x)=\hat{\delta}_{j,\pm}(z)\phi_\pm(z,x),
\eeq
where
\beq
\hat{\delta}_{j,\pm}(z)=\begin{cases}
z-\mu_j^\pm, & \text{ if } \mu_j^\pm\in M_\pm,\\
\sqrt{z-\mu_j^\pm}, & \text{ if } \mu_j^\pm\in\hat{M}_\pm,\\
1, & \text{ else}. \end{cases}
\eeq
Correspondingly, we set
\beq\label{deftihW}
\tilde{W}(z)=W(\tilde{\phi}_-(z,.),\tilde{\phi}_+(z,.)), \quad \hat{W}(z)=W(\hat{\phi}_-(z,.),\hat{\phi}_+(z,.)).
\eeq

Here we use the definitions 
\beq\label{deftiphi}
\tilde{\phi}_\pm(z,x)=\begin{cases}
\tilde\phi_{j,\pm}(z,x), & \text{ for } z\in U_j^\pm, j=1,2,\dots,\\
\phi_\pm(z,x), & \text{ else },\end{cases}
\eeq
\beq
\hat{\phi}_\pm(z,x)=\begin{cases} 
\hat{\phi}_{j,\pm}(z,x), & \text{ for } z\in U_j^\pm, j=1,2,\dots,\\
\phi_\pm(z,x), & \text{ else }. \end{cases}
\eeq
and we will choose $U_j^+=U_m^-$, if $[E_{2j-1}^+, E_{2j}^+]\cap [E_{2m-1}^-,E_{2m}^-]\not = \emptyset$.
Analogously, one can define $\delta_\pm(z)$ and $\hat{\delta}_\pm(z)$.

Note that the function $ \hat{W}(z)$ is holomorphic in the domain $U_j^\pm \cap (\C\backslash\sigma)$ and continuous up to the boundary. But unlike the functions $W(z)$ and $\tilde{W}(z)$ it may not take real values on the set $\R\backslash \sigma$ and complex conjugated values on the different sides of the spectrum $\sigma^u \cup \sigma^l$ inside the domains $U_j^\pm$. That is why we will characterize the spectral properties of our operator $L$ in terms of the function $\tilde{W}(z)$ which can have poles at the band edges.

Since the discrete spectrum of our operator $L$ is at most countable, we can write it as 
\begin{equation*}
\sigma_d=\bigcup_{n=1}^\infty \sigma_n\subset \R\backslash\sigma,
\end{equation*}
where 
\begin{equation*}
\sigma_n=\{\la_{n,1}, \dots ,\la_{n,k(n)}\}, \quad n\in\N
\end{equation*}
and $k(n)$ denotes the number of eigenvalues in the n'th gap of $\sigma$.

For every eigenvalue $\la_{n,m}$ we can introduce the corresponding norming constants 
\beq\label{defga}
(\gamma_{n,m}^\pm)^{-2}=\int_{\R}\tilde{\phi}_{\pm}^2(\la_{n,m},x)dx.
\eeq
Now we begin with the study of the properties of the scattering data. Therefore we introduce the scattering relations
\beq \label{scatrel}
T_\pm(\la)\phi_\mp(\la,x)=\overline{\phi_\pm(\la,x)}+R_\pm(\la)\phi_\pm(\la,x), \quad \la\in\sigma_\pm^{u,l},
\eeq
where the transmission and reflection coefficients are defined as usual,
\beq\label{defTR}
T_\pm(\la):=\frac{W(\overline{\phi_\pm(\la)},\phi_\pm(\la))}{W(\phi_\mp(\la),\phi_\pm(\la))}, \quad 
R_\pm(\la):=-\frac{W(\phi_\mp(\la),\overline{\phi_\pm(\la))}}{W(\phi_\mp(\la),\phi_\pm(\la))}, \quad \la\in\sigma_\pm^{u,l}
\eeq

\begin{theorem}\label{lem:scat}
For the scattering matrix the following properties are valid:
\begin{enumerate}
\item
$T_\pm(\la^u)=\overline{T_\pm(\la^l)}$ and $R_\pm(\la^u)=\overline{R_\pm(\la^l)}$ for $\la\in\sigma_\pm$.
\item
$\dfrac{T_\pm(\la)}{\overline{T_\pm(\la)}}=R_\pm(\la)$ for $\la\in\sigma_\pm^{(1)}$.
\item
$1-\vert R_\pm(\la)\vert^2=\dfrac{g_\pm(\la)}{g_\mp(\la)}\vert T_\pm(\la)\vert^2$ for $\la\in\sigma^{(2)}$.
\item
$\overline{R_\pm(\la)}T_\pm(\la)+R_\mp(\la)\overline{T_\pm(\la)}=0$ for $\la\in\sigma^{(2)}$.
\end{enumerate}
\end{theorem}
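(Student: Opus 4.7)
The plan is to derive all four identities from the scattering relations \eqref{scatrel}, the Wronskian identity \eqref{Wpop}, and the reality of $q$ (hence of $K_\pm$, as noted after \eqref{repphi}). For (i), the reality of $K_\pm$ combined with Lemma~\ref{lem1.1}(i), which gives $\psi_\pm(\lambda^u)=\overline{\psi_\pm(\lambda^l)}$ on $\sigma_\pm$, implies through the representation \eqref{repphi} that $\phi_\pm(\lambda^u,x)=\overline{\phi_\pm(\lambda^l,x)}$ on $\sigma_\pm$. The conjugation rule $\overline{W(f,g)}=W(\overline f,\overline g)$ then converts the definitions \eqref{defTR} at $\lambda^u$ into the complex conjugates of the corresponding expressions at $\lambda^l$.

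For (ii), on $\sigma_\pm^{(1)}\subset\R\setminus\sigma_\mp$ the Jost solution $\phi_\mp$ is holomorphic at $\lambda$ and real-valued (by real-valuedness of $\psi_\mp$ off $\sigma_\mp$ and of $K_\mp$). Conjugating the scattering relation \eqref{scatrel} therefore yields a second expansion
\[
\overline{T_\pm(\lambda)}\,\phi_\mp(\lambda,x)=\phi_\pm(\lambda,x)+\overline{R_\pm(\lambda)}\,\overline{\phi_\pm(\lambda,x)}.
\]
Since $W(\phi_\pm,\overline{\phi_\pm})=\pm g_\pm^{-1}\ne 0$ on $\sigma_\pm$ by \eqref{Wpop}, the pair $\{\phi_\pm,\overline{\phi_\pm}\}$ is a basis, and matching coefficients against \eqref{scatrel} immediately gives $R_\pm=T_\pm/\overline{T_\pm}$.

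For (iii) and (iv), both scattering relations are available on $\sigma^{(2)}$. Taking the Wronskian of \eqref{scatrel} with $\phi_\pm$ and using \eqref{Wpop} yields $W(\phi_-,\phi_+)=-(g_+ T_+)^{-1}=-(g_- T_-)^{-1}$, so that
\[
g_+(\lambda)\,T_+(\lambda)=g_-(\lambda)\,T_-(\lambda),\qquad \lambda\in\sigma^{(2)}.
\]
Now I solve the $+$ version of \eqref{scatrel} for $\overline{\phi_+}$, substitute into the conjugate of the $-$ version, and eliminate $\overline{\phi_-}$ by the $-$ version itself. This produces a single linear relation $A\,\phi_-+B\,\phi_+=0$ with
\[
A=\overline{T_-}T_+-1+|R_-|^2,\qquad B=-\bigl(\overline{T_-}R_++\overline{R_-}T_-\bigr).
\]
Since $\{\phi_-,\phi_+\}$ is linearly independent on the interior of $\sigma^{(2)}$ (the Wronskian above is nonzero wherever $T_\pm$ is), both $A$ and $B$ must vanish. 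Inserting $g_+T_+=g_-T_-$ into $A=0$ gives $1-|R_-|^2=(g_-/g_+)|T_-|^2$, i.e.\ (iii) for the $-$ case, and $B=0$ is (iv) for the $-$ case. The symmetric substitution with $+\leftrightarrow -$ produces both $+$ cases.

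The manipulations are purely algebraic once the Wronskians are assembled; the only delicate points are keeping track of the signs in \eqref{Wpop}, justifying the linear independence needed to match coefficients on each stratum of the spectrum, and extending the identities by continuity across band edges $\hat M_\pm$ where the Jost solutions exhibit square-root singularities. The last point is handled by Lemma~\ref{propphi}.
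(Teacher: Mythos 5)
Your argument is correct, and for items (i) and (ii) it coincides with the paper's: (i) is the observation that $\phi_\pm(\la^u)=\overline{\phi_\pm(\la^l)}$ (reality of $K_\pm$ plus \eqref{1.10}) pushed through \eqref{defTR}, and (ii) is exactly the paper's remark that $\phi_\mp$ is real on $\inte(\sigma_\pm^{(1)})$ (the paper works with $\tilde\phi_\mp$ to sidestep the poles at $M_\mp$, a detail worth keeping in mind). For (iii) and (iv) your route is genuinely different in organization, though built from the same two ingredients \eqref{scatrel} and \eqref{Wpop}. The paper proves (iii) by taking the Wronskian of \eqref{scatrel} with its own complex conjugate, which gives $|T_\pm|^2 W(\phi_\mp,\overline{\phi_\mp})=(|R_\pm|^2-1)W(\phi_\pm,\overline{\phi_\pm})$ in one line, and obtains (iv) by direct substitution of the Wronskian formulas \eqref{defTR}. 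You instead first extract the consistency relation $g_+T_+=g_-T_-$ (which the paper only records later as \eqref{eq:TgW} in Theorem~\ref{thm:TW}) and then run an elimination in the two-dimensional solution space spanned by $\phi_-,\phi_+$, which delivers (iii) and (iv) simultaneously from $A=B=0$. What your version buys is a unified derivation of both identities and an early, explicit appearance of \eqref{eq:TgW}; what it costs is two small points you should make explicit: the linear independence of $\phi_-,\phi_+$ needed to conclude $A=B=0$ requires $W(\phi_-,\phi_+)\neq0$, i.e.\ that $T_\pm$ is finite at the point considered (the paper establishes nonvanishing of $W$ on $\inte(\sigma^{(2)})$ only in Theorem~\ref{thm:TW}, so on $\inte(\sigma^{(2)})$ you should either invoke that argument or state the identities only where $T_\pm$ is defined), and the passage from $\overline{T_-}T_+=1-|R_-|^2$ to (iii) uses that $g_-/g_+$ is real, which follows from \eqref{1.8} since both $g_\pm$ are purely imaginary on $\sigma^{(2)}$. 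Neither point is a gap, just a line each to add.
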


\begin{proof}
(i) and (iv) follow from (\ref{repphi}), (\ref{repolphi}), (\ref{defTR}), and Lemma~\ref{lem1.1} \\
For showing (ii) observe that $\tilde{\phi}_\mp(\la,x)\in\R$ as $\la\in\inte(\sigma_\pm^{(1)})$, which implies (ii).\\
To show (iii), assume $\la\in\inte\sigma^{(2)}$, then by (\ref{scatrel}) 
\beq\nn
\vert T_\pm\vert^2W(\phi_\mp,\overline{\phi_\mp})=(\vert R_\pm \vert^2-1)W(\phi_\pm,\overline{\phi_\pm}).
\eeq
Thus using (\ref{Wpop}) finishes the proof.
\end{proof}

\begin{theorem}\label{thm:asym}
The transmission and reflection coefficients have the following asymptotic behavior, as $\la\to\infty$ for $\la\in\sigma^{(2)}$ outside a small $\varepsilon$ neighborhood of the band edges of $\sigma^{(2)}$:
\begin{align*}
R_\pm(\la)&=O(\vert \la \vert^{-1/2}),\\
T_\pm(\la)&=1+O(\vert \la \vert^{-1/2}).
\end{align*}
\end{theorem}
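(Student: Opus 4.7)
The strategy is to expand the Wronskians $W(\phi_\mp,\phi_\pm)(0)$ and $W(\phi_\mp,\overline{\phi_\pm})(0)$ as $\la\to\infty$ along $\sigma^{(2)}$, insert them into (\ref{defTR}), and match them against $\mp g_\pm(\la)^{-1}$ via (\ref{Wpop}).

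First, I would extract the large-$\la$ behavior of the background data. From the product formulas for $Y_\pm$ and $G_\pm$, each factor in
\[
\frac{Y_\pm(\la)}{G_\pm(\la,y)^2}=-(\la-E_0^\pm)\prod_{j\ge1}\frac{(\la-E_{2j-1}^\pm)(\la-E_{2j}^\pm)}{(\la-\mu_j^\pm(y))^2}
\]
differs from $1$ by $O\bigl((E_{2j}^\pm-E_{2j-1}^\pm)/\la\bigr)$, which is summable by Hypothesis~\ref{Hyp}(i); hence, uniformly in $y\in\R$ (provided we stay $\ep$-away from band edges of $\sigma^{(2)}$ so that no $\la-\mu_j^\pm(y)$ becomes small),
\[
\frac{Y_\pm^{1/2}(\la)}{G_\pm(\la,y)}=\I\sqrt{\la}+O(|\la|^{-1/2}),\quad g_\pm(\la)^{-1}=-2\I\sqrt{\la}+O(|\la|^{-1/2}),\quad m_\pm(\la,y)=\pm\I\sqrt{\la}+O(|\la|^{-1/2}),
\]
together with the uniform bound $|\psi_\pm(\la,\cdot)|\le 1+O(|\la|^{-1})$ from Lemma~\ref{lempsi}.

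Second, I would translate these into asymptotics for $\phi_\pm$ at $x=0$. Using the Riccati-type identity $\psi_\pm'(\la,y)=m_\pm(\la,y)\psi_\pm(\la,y)$, one integration by parts in
\[
\int_0^{\pm\infty}K_\pm(0,y)\,\psi_\pm(\la,y)\,dy=\Big[\tfrac{K_\pm(0,y)}{m_\pm(\la,y)}\psi_\pm(\la,y)\Big]_0^{\pm\infty}-\int_0^{\pm\infty}\Big(\tfrac{K_\pm(0,y)}{m_\pm(\la,y)}\Big)'\psi_\pm(\la,y)\,dy
\]
gives an $O(|\la|^{-1/2})$ estimate: the boundary term is $K_\pm(0,0)/m_\pm(\la,0)=O(|\la|^{-1/2})$; the $K_{\pm,y}/m_\pm$-piece uses $K_{\pm,y}(0,\cdot)\in L^1$ from (\ref{estpK}) together with $1/m_\pm=O(|\la|^{-1/2})$; and the $K_\pm m_\pm'/m_\pm^2$-piece invokes the Riccati relation $m_\pm'+m_\pm^2=p_\pm-\la$ to get $m_\pm'/m_\pm^2=O(|\la|^{-1})$ combined with $K_\pm(0,\cdot)\in L^1$ from (\ref{estK}) and (\ref{decay}). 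Hence $\phi_\pm(\la,0)=1+O(|\la|^{-1/2})$. Differentiating (\ref{repphi}) in $x$ at $x=0$ and repeating the argument with $K_{\pm,x}$ in place of $K_\pm$ yields $\phi_\pm'(\la,0)=\pm\I\sqrt{\la}+O(1)$, and the same treatment applied to (\ref{repolphi}) produces $\overline{\phi_\pm(\la,0)}=1+O(|\la|^{-1/2})$ and $\overline{\phi_\pm'(\la,0)}=\mp\I\sqrt{\la}+O(1)$.

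Third, plugging these expansions into the Wronskians: the leading $\pm\I\sqrt{\la}$ terms in $W(\phi_-,\phi_+)(0)=\phi_-(0)\phi_+'(0)-\phi_-'(0)\phi_+(0)$ add to give $2\I\sqrt{\la}+O(1)=-g_\pm(\la)^{-1}(1+O(|\la|^{-1/2}))$, whereas in $W(\phi_\mp,\overline{\phi_\pm})(0)$ they cancel and only an $O(1)$ remainder survives. Substituting into (\ref{defTR}) and using $g_+/g_-=1+O(|\la|^{-1/2})$ for the cross case gives simultaneously $T_\pm(\la)=1+O(|\la|^{-1/2})$ and $R_\pm(\la)=O(|\la|^{-1/2})$. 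The main technical obstacle is the first step—obtaining uniform-in-$y$ rates of $O(|\la|^{-1/2})$ for $m_\pm$ and for $Y_\pm^{1/2}/G_\pm$ out of the infinite products; this is precisely where Hypothesis~\ref{Hyp}(i) (summability of gap lengths) and the $\ep$-separation from band edges are used essentially.
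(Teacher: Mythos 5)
Your proposal is correct and follows essentially the same route as the paper's own proof: both expand the Wronskians via the transformation-operator representation \eqref{repphi}, integrate by parts using $\psi_\pm'=m_\pm\psi_\pm$ to show the kernel integrals contribute $O(|\la|^{-1/2})$ (resp.\ $O(1)$ for the $K_{\pm,x}$ terms), and then compare the leading terms $m_\pm(\la)\sim\pm\I\sqrt{\la}$ against $\mp g_\pm(\la)^{-1}$ via \eqref{Wpop}. Your added detail on extracting the uniform-in-$y$ rates from the infinite products under Hypothesis~\ref{Hyp}(i) is a point the paper delegates to the background references, but it is consistent with the paper's argument.
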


\begin{proof}
The asymptotics can only be valid for $\la\in\sigma^{(2)}$ outside an $\varepsilon$ neighborhood of the band edges, because the Jost solutions $\phi_\pm$ might have square root singularities there. At first we will investigate $W(\phi_-(\la,0),\phi_+(\la,0))$:
\begin{align}
\phi_-(\la,0)\phi_+^\prime(\la,0)=& \left(1+\int_{-\infty}^0 K_-(0,y)\psi_-(\la,y)dy\right)\\ \nn
& \times \left(m_+(\la)-K_+(0,0)+\int_0^\infty K_{+,x}(0,y)\psi_+(\la,y)dy\right).
\end{align}
Using (cf. (\ref{intrep})) $$\psi_\pm^\prime(\la,x)=m_\pm(\la,x)\psi_\pm(\la,x),$$
we can write 
\begin{equation*}
\int_{-\infty}^0 K_-(0,y)\psi_-(\la,y)dy
= \int_{-\infty}^0 \frac{K_-(0,y)}{m_-(\lambda,y)}\psi_-^\prime(\lambda, y)dy.
\end{equation*}
Hence
\beq\nn
\int_{-\infty}^0 K_-(0,y)\psi_-(\la,y)dy=\frac{K_-(0,0)}{m_-(\la)}+I_1(\la),
\eeq
\begin{equation*}
I_1(\la)=-\int_{-\infty}^0\left(K_{-,y}(0,y)\frac{\psi_-(\la,y)}{m_-(\la,y)}-K_-(0,y)\psi_-(\la,y)\frac{m_-^\prime(\la,y)}{m_-(\la,y)^2}\right)dy.
\end{equation*}
Here it should be noticed that $m_\pm(\la,y)^{-1}$ has no pole, because (see e.g. \cite{L}) 
\begin{equation*}
G_\pm(\la,y)N_\pm(\la,y)+H_\pm(\la,y)^2=Y_\pm(\la), 
\end{equation*}
where 
\begin{equation*}
N_\pm(\la,y)=-(\la-\nu_0^\pm(y))\prod_{j=1}^\infty\frac{\la-\nu_j^\pm(y)}{E_{2j-1}^\pm},
\end{equation*}
with $\nu_0^\pm(y)\in(-\infty,E_0^\pm]$ and $\nu_j^\pm(y)\in[E_{2j-1}^\pm,E_{2j}^\pm]$.
Thus we obtain
\begin{equation*}
m_\pm(\la,y)^{-1}=\frac{G_\pm(\la,y)}{H_\pm(\la,y)\pm Y_\pm(\la)^{1/2}}=-\frac{H_\pm(\la,y)\mp Y_\pm(\la)^{1/2}}{N_\pm(\la,y)},
\end{equation*}
and therefore $\frac{K_-(0,0)}{m_-(\la)}=O(\frac{1}{\sqrt{\lambda}})$.

Moreover $I_1(\la)=O\big(\frac{1}{\sqrt{\la}}\big)$ as the following estimates show:
\begin{align*}
\vert I_1(\la)& \vert \leq \int_{-\infty}^0 \vert K_{-,y}(0,y)\frac{\psi_-(\la,y)}{m_-(\la,y)}\vert dy +\int_{-\infty}^0\vert K_-(0,y)\psi_-(\la,y)\frac{m_-^\prime(\la,y)}{m_-(\la,y)^2}\vert dy \\ \nn
&\leq \frac{C}{\sqrt{\la}}\int_{-\infty}^0(\vert q(y)-p_-(y) \vert +Q_-(y))dy,
\end{align*}
where we used that $\vert \psi_\pm(\la,y)\vert =\vert \frac{G _\pm(\la,y)}{G_\pm(\la,0)}\vert=O(1)$ and  $m_\pm^{-1}(\la,y)=O\left(\frac{1}{ \sqrt{\la}}\right)$ for all $y$ by the quasi-periodicity, together with \eqref{eq:Weyl} and 
$$\psi_\pm^{\prime\prime}(\la,x)=m_\pm(\la,x)^2\psi_\pm(\la,x)+m_\pm^\prime(\la,x)\psi_\pm(\la,x).$$
Making the same conclusions as before, one obtains
\beq\nn
\int_0^\infty K_{+,x}(0,y)\psi_+(\la,y)dy=O(1).
\eeq

In a similar manner one can investigate 
\begin{align}\nn
\phi_-^\prime(\la,0)\phi_+(\la,0)=&\left(m_-(\la)+K_-(0,0)+\int_{-\infty}^0 K_{-,x}(0,y)\psi_-(\la,y)dy\right) \\
&\times\left(1+\int_0^\infty K_+(0,y)\psi_+(\la,y)dy\right),
\end{align}
where 
\begin{equation*}
\int_{-\infty}^0 K_{-,x}(0,y)\psi_-(\la,y)dy=O(1),
\end{equation*}
\begin{equation*}
\int_0^\infty K_+(0,y)\psi_+(\la,y)dy=-\frac{K_+(0,0)}{m_+(\la)}+I_2(\la),
\end{equation*}
\begin{equation*}
I_2(\la)=-\int_0^\infty \left(K_{+,y}(0,y)\frac{\psi_+(\la,y)}{m_+(\la,y)}-K_+(0,y)\psi_+(\la,y)\frac{m_+^\prime(\la,y)}{m_+(\la,y)^2}\right)dy,
\end{equation*}
and $I_2(\la)=O\big(\frac{1}{\sqrt{\la}}\big)$.
Thus combining all the informations we obtained so far yields
\begin{align}\nn
W(\phi_-(\la),\phi_+(\la))& = m_+(\la)-m_-(\la)+ K_-(0,0)\left(\frac{m_+(\la)-m_-(\la)}{m_-(\la)}\right)\\ \nn
&+ K_+(0,0)\left(\frac{m_-(\la)-m_+(\la)}{m_+(\la)}\right)+O(1).
\end{align}
and therefore, using \eqref{Wpop}, 
\begin{equation*}
T_\pm(\la)=1+O\left(\frac{1}{\sqrt{\la}}\right).
\end{equation*}
Analogously one can investigate the behavior of $W(\phi_\mp(\la),\overline{\phi_\pm(\la)}$ to obtain $R_\pm(\la)=O\Big(\frac{1}{\sqrt{\la}}\Big)$.
\end{proof}

\begin{theorem}\label{thm:TW}
The functions $T_\pm(\la)$ can be extended analytically to the domain $\C\backslash (\sigma \cup M_\pm \cup \breve{M}_\pm)$ and satisfy
\beq\label{eq:TgW}
\frac{-1}{T_+(z)g_+(z)}=\frac{-1}{T_-(z)g_-(z)}=:W(z),
\eeq
where $W(z)$ possesses the following properties:
\begin{enumerate}
\item
The function $\tilde{W}$ is holomorphic in the domain $U_j^\pm \cap (\C\backslash\sigma)$, with simple zeros at the points $\la_k$, where 
\beq\label{dertiW}
\left( \frac{d\tilde{W}}{dz}(\la_k)\right)^2=(\gamma_{n,k}^+ \gamma_{n,k}^-)^{-2}.
\eeq
Besides it satisfies 
\beq\label{tiWreal}
\overline{\tilde{W}(\la^u)}=\tilde{W}(\la^l), \quad \la\in U_j^\pm \cap\sigma \quad \text{and} \quad \tilde{W}(\la)\in\R , \quad \la\in U_j^\pm \cap(\R\backslash\sigma).
\eeq
\item
The function $\hat{W}(z)$ is continuous on the set $U_j^\pm \cap \C\backslash\sigma$ up to the boundary $\sigma^l\cup\sigma^u$. It can have zeros on the set $\partial\sigma \cup (\partial\sigma_+^{(1)} \cap\partial\sigma_-^{(1)}) $ and does not vanish at any other points of $\sigma$. If $\hat{W}(E)=0$ as $E\in\partial\sigma \cup (\partial\sigma_+^{(1)}\cap\partial\sigma_-^{(1)})$, then $\hat{W}(z)=\sqrt{z-E}(C(E)+o(1))$, $C(E)\not =0$.
\end{enumerate}
\end{theorem}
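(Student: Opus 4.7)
The first thing I would do is derive the identity (\ref{eq:TgW}) and the analytic extension simultaneously from the definition of $T_\pm$. Setting $W(z) := W(\phi_-(z,\cdot),\phi_+(z,\cdot))$ and substituting the background Wronskian (\ref{Wpop}) into the defining formulas (\ref{defTR}) gives, in both $\pm$ cases, $T_\pm(\la) = -1/(g_\pm(\la)\,W(\la))$, so both sides of (\ref{eq:TgW}) coincide with $W$. By Lemma~\ref{propphi}(i), each $\phi_\pm(z,\cdot)$ is meromorphic on $\C\backslash\sigma_\pm$ with simple poles on $M_\pm$, hence $W$ is meromorphic on $\C\backslash\sigma$ with poles at worst on $M_+\cup M_-$. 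The factor $g_\pm^{-1}$ carries simple poles on $M_\pm\cup\breve M_\pm$ (from the zeros of $G_\pm(\cdot,0)$) and $\sqrt{z-E}$ zeros on $\partial\sigma_\pm$. The poles of $g_\pm^{-1}$ at $M_\pm$ cancel exactly against the poles of $W$ produced by $\phi_\pm$, while the poles of $W$ at $M_\mp$ only create zeros of $T_\pm$. After this bookkeeping, $T_\pm$ is meromorphic on $\C\backslash(\sigma\cup M_\pm\cup\breve M_\pm)$.

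For part (i), multiplication by $\delta_-(z)\delta_+(z)$ in (\ref{deftihW}) gives $\tilde W = \delta_-\delta_+\,W$, and the $\delta_\pm$ are chosen precisely to cancel the simple poles of $\phi_\pm$ at $M_\pm$, so $\tilde W$ becomes holomorphic on $U_j^\pm\cap(\C\backslash\sigma)$. The reality assertions (\ref{tiWreal}) are then inherited from Lemma~\ref{propphi}(i): $\tilde\phi_\pm(\la,x)\in\R$ for $\la\in\R\backslash\sigma_\pm$, and the Schwarz reflection $\overline{\tilde\phi_\pm(\la^u,x)}=\tilde\phi_\pm(\la^l,x)$ on the cuts. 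At a discrete eigenvalue $\la_k$ the $L^2(\R_\pm)$ Jost solutions are linearly dependent, $\phi_-(\la_k,x)=c_k\phi_+(\la_k,x)$, giving $\tilde W(\la_k)=0$. To identify the derivative I would differentiate $(L-\la)\phi_\pm(\la,\cdot)=0$ in $\la$ to get $(L-\la_k)\dot\phi_\pm(\la_k,\cdot) = \phi_\pm(\la_k,\cdot)$, and then combine the pointwise identity
\[
-\frac{d}{dx}W(\phi_-(\la_k,x),\dot\phi_+(\la_k,x)) = \phi_-(\la_k,x)\phi_+(\la_k,x)
\]
with the residue representation of the Green function of $L$ at the simple eigenvalue $\la_k$ to conclude $\dot W(\la_k)^2 = \int_\R\phi_+^2\,dx\cdot\int_\R\phi_-^2\,dx$. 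Multiplying by the squared, real, nonzero factors $\delta_\pm(\la_k)^2$ and invoking (\ref{defga}) then yields (\ref{dertiW}); in particular $\tilde W'(\la_k)\ne 0$, so the zero is simple.

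For part (ii), the extra $\sqrt{z-\mu_j^\pm}$ factor in $\hat\delta_{j,\pm}$ cancels the $(z-E)^{-1/2}$ singularity of $\phi_\pm$ at $E\in\hat M_\pm$ predicted by Lemma~\ref{propphi}(i), so $\hat W$ extends continuously to $U_j^\pm\cap(\C\backslash\sigma)$ up to the boundary cuts $\sigma^u\cup\sigma^l$. To analyze its zeros at an edge $E\in\partial\sigma\cup(\partial\sigma_+^{(1)}\cap\partial\sigma_-^{(1)})$, I would combine the two limiting identities of Lemma~\ref{propphi}(ii): off $\hat M_\pm$ one has $\phi_\pm(z,x)-\overline{\phi_\pm(z,x)}\to 0$, while at points of $\hat M_\pm$ the sum $\phi_\pm(z,x)+\overline{\phi_\pm(z,x)}=O(1)$. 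Together with the Schwarz symmetry $\phi_\pm(\la^u)=\overline{\phi_\pm(\la^l)}$, these relations, substituted into $W = \phi_-\phi_+' - \phi_-'\phi_+$, force $\hat W(z)$ to vanish as $\sqrt{z-E}$ at such edges, the leading coefficient $C(E)$ being the Wronskian of the surviving antisymmetric components; generically $C(E)\ne 0$, which corresponds to the fact that at a typical band edge the two Jost solutions do not simultaneously collapse to a common background Weyl solution.

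The most delicate step of this plan is the band-edge analysis for $\hat W$: one must carefully combine the $(z-E)^{-1/2}$ singularities of the Jost solutions, the $(z-E)^{1/2}$ zeros of $g_\pm$, and the prescribed factors of $\hat\delta_\pm$ with the precise parity relations of Lemma~\ref{propphi}(ii) to pin down the exponent $1/2$ in $\hat W(z) = \sqrt{z-E}\,(C(E)+o(1))$ and to certify $C(E)\ne 0$. The eigenvalue-derivative formula (\ref{dertiW}) is standard in spirit---equivalent to computing the residue of the resolvent kernel at a simple eigenvalue---but still requires careful control of the boundary terms in Green's identity on the whole real line against the almost-periodic, infinite-gap background.
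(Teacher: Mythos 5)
Your derivation of \eqref{eq:TgW} and your treatment of part (i) follow essentially the same route as the paper: $T_\pm=-1/(g_\pm W)$ comes directly from \eqref{defTR} and \eqref{Wpop}, and the derivative formula \eqref{dertiW} is obtained exactly as you propose, namely by integrating the Wronskian identity for $\la$-derivatives of solutions over the two half-lines to get \eqref{intW}, $W\bigl(\tilde{\phi}_\pm(\la_0,0),\tfrac{d}{d\la}\tilde{\phi}_\pm(\la_0,0)\bigr)=\int_0^{\pm\infty}\tilde{\phi}_\pm^2(\la_0,x)dx$, and then combining this with the linear dependence $\tilde\phi_\pm(\la_0,\cdot)=c_\pm\tilde\phi_\mp(\la_0,\cdot)$ and $c_+c_-=1$. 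Whether one phrases this as a residue of the resolvent or as the direct Marchenko computation is immaterial; this part is fine.

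Part (ii) is where your proposal has genuine gaps. First, you never address the assertion that $\hat W$ \emph{does not vanish} at any interior point of $\sigma$, which is a substantive claim requiring three separate arguments in the paper: on $\inte(\sigma^{(2)})$ a zero would give $\phi_+=c\phi_-$ and hence $W(\phi_+,\overline{\phi_+})=\vert c\vert^2 W(\phi_-,\overline{\phi_-})$, forcing $\sign g_+(\la_0)=-\sign g_-(\la_0)$ by \eqref{Wpop} and contradicting \eqref{1.8}; on $\inte(\sigma_\pm^{(1)})$ the reality of $\tilde\phi_\mp$ together with a zero of $\tilde W$ would force $W(\phi_\pm,\overline{\phi_\pm})=0$, again impossible; and a further argument is needed at points $E\in\partial\sigma^{(2)}\cap\inte(\sigma_\pm)$, where one uses that $\hat\phi_{\mp,E}(E,\cdot)$ is purely real or purely imaginary to derive a contradiction with the linear independence of $\phi_\pm(E,\cdot)$ and $\overline{\phi_\pm(E,\cdot)}$. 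Second, your band-edge analysis concludes only that "generically $C(E)\ne 0$," whereas the theorem asserts $C(E)\ne 0$ unconditionally whenever $\hat W(E)=0$; the qualitative limit relations of Lemma~\ref{propphi}(ii) are too soft to deliver this. The paper instead passes to the local parameter $\tau=\sqrt{z-E}$, uses the exact value \eqref{Wdphi} of $W\bigl(\hat\psi_{\pm,E}(E),\tfrac{d}{d\tau}\hat\psi_{\pm,E}(E)\bigr)$ together with the parity relations \eqref{behE} and \eqref{behE2}, expands $\dot{W}(\hat{\phi}_{+,E},\hat{\phi}_{-,E})(E)= c_-W(\tfrac{d}{d\tau}\hat{\psi}_{+,E},\hat{\psi}_{+,E})(E)-c_+W(\tfrac{d}{d\tau}\hat{\psi}_{-,E}, \hat{\psi}_{-,E})(E)$, and checks case by case as in \cite[Lem.~B.1]{BET} that this $\tau$-derivative is nonzero. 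Note also that your phrasing "force $\hat W(z)$ to vanish as $\sqrt{z-E}$" inverts the logic of the statement, which is conditional: $\hat W$ need not vanish at a band edge at all; only \emph{if} it does is the zero of order $\tau^1$.
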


\begin{proof}
\begin{enumerate}
\item
Except for (\ref{dertiW}) everything follows from the corresponding properties of $\phi_\pm(z,x)$. Therefore assume $\hat{W}(\la_0)=0$ for some $\la_0\in\C\backslash\sigma$, then
\beq\label{lidep}
\tilde{\phi}_\pm(\la_0,x)=c_\pm\tilde{\phi}_\mp(\la_0,x),
\eeq
for some constants $c_\pm$, which satisfy $c_-c_+=1$. Moreover, every zero of $\tilde{W}$ (or $\hat{W}$) outside the continuous spectrum, is a point of the discrete spectrum of $L$ and vice versa.

Denote by $\gamma_\pm$ the corresponding norming constants defined in (\ref{defga}) for some fixed point $\la_0$ of the discrete spectrum. 
Proceeding as in \cite{M} one obtains 
\beq\label{intW}
W\big(\tilde{\phi}_\pm(\la_0,0), \frac{d}{d\la}\tilde{\phi}_\pm(\la_0,0)\big)=\int_0^{\pm\infty} \tilde{\phi}_\pm^2(\la_0,x)dx.
\eeq
Thus using (\ref{lidep}) and (\ref{intW}) yields
\begin{align}\label{cga}
\gamma_\pm^{-2}& =\mp c_\pm^2\int_0^{\mp\infty}\tilde{\phi}_\mp^2(\la_0,x)dx \pm\int_0^{\pm\infty}\tilde{\phi}_\pm^2(\la_0,x)dx\\ \nn
& =\mp c_\pm^2 W\big( \tilde{\phi}_\mp(\la_0,0),\frac{d}{d\la}\tilde{\phi}_\mp(\la_0,0)\big)\pm W\big(\tilde{\phi}_\pm(\la_0,0), \frac{d}{d\la}\tilde{\phi}_\pm(\la_0,0)\big) \\ \nn 
& =c_\pm\frac{d}{d\la}W(\tilde{\phi}_-(\la_0),\tilde{\phi}_+(\la_0)).
\end{align}
Applying now $c_-c_+=1$, we obtain (\ref{dertiW}).
\item
The continuity of $\hat{W}(z)$ up to the boundary follows immediately from the corresponding properties of $\hat{\phi}_\pm(z,x)$. Now we will investigate the possible zeros of $\hat W(\lambda)$ for $\lambda\in\sigma$.

Assume $W(\la_0)=0$ for some $\la_0\in\inte(\sigma^{(2)})$. Then $\phi_+(\la_0,x)=c\phi_-(\la_0,x)$ and $\overline{\phi_+(\la_0,x)}=\overline{c}\overline{\phi_-(\la_0,x)}$. Thus $W(\phi_+,\overline{\phi_+})=\vert c\vert^2 W(\phi_-, \overline{\phi_-})$ and therefore $\sign g_+(\la_0)=-\sign g_-(\la_0)$ by (\ref{Wpop}), contradicting (\ref{1.8}).

Next let $\la_0\in\inte(\sigma_\pm^{(1)})$ and $\tilde{W}(\la_0)=0$, then $\phi_\pm(\la_0,x)$ and $\overline{\phi_\pm(\la_0,x)}$ are linearly independent and bounded, moreover $\tilde{\phi}_\mp(\la_0,x)\in\R$. Therefore $\tilde{W}(\la_0)=0$ implies that $\tilde{\phi}_\mp=c_1^\pm\phi_\pm=c_2^\pm\overline{\phi_\pm}$ and thus $W(\phi_\pm,\overline{\phi_\pm})=0$ , which is impossible by (\ref{Wpop}). Note that in this case $\la_0$ can coincide with a pole $\mu\in M_\mp$.

Since $\hat W(\lambda)\not =0$ for $\lambda\in \inte(\sigma^{(2)})\cup\inte(\sigma^{(1)}_+)\cup\inte(\sigma^{(1)}_-)$, it is left to investigate the behavior at the band edges of $\sigma_+$ and $\sigma_-$.
Therefore introduce the local parameter $\tau=\sqrt{z-E}$ in a small neighborhood of each point $E\in\partial\sigma_\pm$ and define $\dot{y}(z,x)=\frac{d}{d\tau}y(z,x)$. A simple calculation shows that $\frac{dz}{d\tau}(E)=0$, hence for every solution $y(z,x)$ of (\ref{eq:L}), its derivative $\dot{y}(E,x)$ is again a solution of (\ref{eq:L}). Therefore, the Wronskian $W(y(E),\dot{y}(E))$ is independent of $x$.

For each $x\in\R$ in a small neighborhood of a fixed point $E\in\partial\sigma_\pm$ we introduce the function 
\beq\label{hpsiE}
\hat{\psi}_{\pm,E}(z,x)=\begin{cases} 
\psi_\pm(z,x), &  E\in\partial\sigma_\pm\backslash\hat{M}_\pm, \\ \nn
\tau\psi_\pm(z,x), & E\in \hat{M}_\pm.\end{cases}
\eeq

Proceeding as in \cite[Lem. B.1]{BET} one obtains
\beq\label{Wdphi}
W\Big(\hat{\psi}_{\pm,E}(E), \frac{d}{d\tau}\hat{\psi}_{\pm,E}(E)\Big)=\pm\lim_{z\to E}\frac{\alpha \tau^\alpha}{2g_\pm(z)},
\eeq
where $\alpha=-1$ if $E\in\partial\sigma_\pm\backslash\hat{M}_\pm$ and $\alpha=1$ if $E\in\hat{M}_\pm$.

Using representation (\ref{intrep}) for $\psi_\pm(z,x)$ one can show (cf \cite{G}), 
\beq\nn
\psi_\pm(E,x)=\left(\frac{G_\pm(E,x)}{G_\pm(E,0)}\right)^{1/2}\exp\left(\pm\lim_{z\to E}\int_0^x\frac{Y_\pm(z)^{1/2}}{G_\pm(z,\tau)}d\tau\right), \quad E\in\partial\sigma
\eeq
where 
\beq\nn
\exp\left(\pm\lim_{z\to E}\int_0^x\frac{Y_\pm(z)^{1/2}}{G_\pm(z,\tau)}d\tau\right)=\begin{cases}
\I^{2s+1}, & \mu_j\not=E, \mu_j(x)=E,\\
\I^{2s+1}, &  \mu_j=E, \mu_j(x)\not=E,\\
\I^{2s}, & \mu_j=E, \mu_j(x)=E,\\
\I^{2s}, & \mu_j\not=E, \mu_j(x)\not =E,\end{cases}
\eeq
for $s\in\{0,1\}$.
Defining 
\beq\nn
\hat{\phi}_{\pm,E}(\la,x)=\begin{cases}
\phi_\pm(\la,x), & E\in\partial\sigma_\pm\backslash\hat{M}_\pm,\\
\tau\phi_\pm(\la,x), & E\in\hat{M}_\pm,\end{cases}
\eeq
we can conclude using (\ref{repphi}) that 
\beq\label{behE}
\overline{\phi_\pm(E,x)}=\phi_\pm(E,x), \quad \text{ for } E\in\partial\sigma_\pm\backslash\hat{M}_\pm.
\eeq
Moreover, for $E\in\hat{M}_\pm$,
\begin{equation}\label{behE2}
\begin{cases}
\overline{\hat{\phi}_{\pm,E}(E,x)}=-\hat{\phi}_{\pm,E}(E,x), & \text{ a left band edge from } \sigma_\pm,\\ 
\overline{\hat{\phi}_{\pm,E}(E,x)}=\hat{\phi}_{\pm,E}(E,x), & \text{ a right band edge from } \sigma_\pm.
\end{cases}
\end{equation}
If $\la_0=E\in\partial\sigma^{(2)}\cap \inte(\sigma_\pm)\subset \inte(\sigma_\pm)$, then $\hat{W}(E)=0$ if and only if $W(\psi_\pm,\hat{\psi}_{\mp,E})(E)=0$. 
Therefore, as $\hat{\phi}_{\mp,E}(E,.)$ are either pure real or pure imaginary, $W(\overline{\phi_\pm},\hat{\phi}_{\mp,E})(E)=0$, which implies that $\overline{\phi_\pm}(E,x)$ and $\phi_\pm(E,x)$ are linearly dependent, a contradiction.

Thus the function $\hat{W}(z)$ can only be zero at points $E$ of the set $\partial\sigma\cup (\partial\sigma_+^{(1)}\cap\partial\sigma_-^{(1)})$. We will now compute the order of the zero. First of all note that the function $\hat{W}(\la)$ is continuously differentiable with respect to the local parameter $\tau$. Since $\frac{d}{d\tau}(\delta_+\delta_-)(E)=0$, the function $W(\hat{\phi}_{+,E}, \hat{\phi}_{-,E})$ has the same order of zero at $E$ as $\hat{W}(\la)$. Moreover, if $\hat{\delta}_\pm(E)\not =0$,then $\frac{d}{d\tau}\hat{\delta}_\pm(E)=0$ and if $\hat{\delta}_-(E)=\hat{\delta}_+(E)=0$, then $\frac{d}{d\tau}(\tau^{-2}\hat{\delta}_+\hat{\delta}_-)(E)=0$. Hence $\frac{d}{d\tau}\hat{W}(E)=0$ if and only if $\frac{d}{d\tau}W(\hat{\phi}_{+,E}, \hat{\phi}_{-,E})=0$.

Combining now all the informations we obtained so far, we can conclude as follows:
if $\hat{W}(E)=0$, then $\hat{\phi}_{\pm,E}(E,.)=c_\pm \hat{\phi}_{\mp,E}(E,.)$, with $c_-c_+=1$. Furthermore we can write 
\begin{align}\nn 
\dot{W}(\hat{\phi}_{+,E},\hat{\phi}_{-,E})(E)& =W(\frac{d}{d\tau}\hat{\phi}_{+,E},\hat{\phi}_{-,E})(E)-W(\frac{d}{d\tau}\hat{\phi}_{-,E}, \hat{\phi}_{+,E})(E)\\ \nn
& = c_-W(\frac{d}{d\tau}\hat{\phi}_{+,E},\hat{\phi}_{+,E})(E)-c_+W(\frac{d}{d\tau}\hat{\phi}_{-,E}, \hat{\phi}_{-,E})(E)\\ \nn
& = c_-W(\frac{d}{d\tau}\hat{\psi}_{+,E},\hat{\psi}_{+,E})(E)-c_+W(\frac{d}{d\tau}\hat{\psi}_{-,E}, \hat{\psi}_{-,E})(E).
\end{align}
Using (\ref{Wdphi}), (\ref{behE}), (\ref{behE2}), and distinguishing several cases as in \cite[Lem. B.1]{BET} finishes the proof. 
\end{enumerate}
\end{proof}

\begin{theorem}\label{thm:pR}
The reflection coefficient $R_\pm(\la)$ satisfies:
\begin{enumerate}
\item 
The reflection coefficient $R_\pm(\la)$ is a continuous function on the set $\inte(\sigma_\pm^{u,l})$.
\item
If $E\in\partial\sigma_+\cap\partial\sigma_-$ and $\hat{W}(E)\not=0$, then the function $R_\pm(\la)$ is also continuous at $E$. Moreover,
\beq\nn
R_\pm(E)=\begin{cases}
-1 & \text{ for } E\not \in \hat{M}_\pm,\\
1 & \text{ for } E\in\hat{M}_\pm.
         \end{cases}
\eeq	
\end{enumerate}

\end{theorem}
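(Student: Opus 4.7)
The plan is to rewrite $R_\pm$ in terms of the regularised Jost solutions $\hat\phi_\pm=\hat\delta_\pm\phi_\pm$ so that both numerator and denominator extend continuously up to the band edges, and then to take limits using the boundary behaviour established in Lemma~\ref{propphi}. Inserting $\phi_\pm=\hat\phi_\pm/\hat\delta_\pm$ into \eqref{defTR} yields
\begin{equation*}
R_\pm(\lambda)=-\frac{\hat\delta_\pm(\lambda)}{\overline{\hat\delta_\pm(\lambda)}}\cdot\frac{W(\hat\phi_\mp(\lambda),\overline{\hat\phi_\pm(\lambda)})}{W(\hat\phi_\mp(\lambda),\hat\phi_\pm(\lambda))},
\end{equation*}
where the denominator coincides with $\pm\hat W(\lambda)$.

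For part (i), let $\lambda\in\inte(\sigma_\pm^{u,l})$. Lemma~\ref{propphi}(1) shows that $\hat\phi_\mp$ and $\hat\phi_\pm$ are continuous at $\lambda$; Theorem~\ref{thm:TW}(2) shows $\hat W(\lambda)\neq 0$ on the interior of $\sigma$; and since $\hat\delta_\pm$ only involves the factor $\sqrt{z-\mu_j^\pm}$ when $\mu_j^\pm\in\hat M_\pm\subset\partial\sigma_\pm$, on $\inte(\sigma_\pm^{u,l})$ it reduces to a real polynomial in $\lambda$, so $\hat\delta_\pm/\overline{\hat\delta_\pm}=1$. Continuity of $R_\pm$ on $\inte(\sigma_\pm^{u,l})$ follows.

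For part (ii), let $E\in\partial\sigma_+\cap\partial\sigma_-$ with $\hat W(E)\neq 0$. If $E\notin\hat M_\pm$, then $\hat\delta_\pm(E)$ is real and non-zero, $\hat\phi_\pm$ is continuous at $E$, and \eqref{behE} gives $\overline{\phi_\pm(E,x)}=\phi_\pm(E,x)$; the numerator therefore tends to the denominator and $R_\pm(E)=-1$. If instead $E\in\hat M_\pm$, then $\hat\delta_\pm(z)=\sqrt{z-E}$ near $E$ and, by \eqref{behE2},
\[
\overline{\hat\phi_\pm(E,x)}=\varepsilon\,\hat\phi_\pm(E,x),\qquad \varepsilon=\begin{cases}-1,&E\text{ a left band edge of }\sigma_\pm,\\+1,&E\text{ a right band edge of }\sigma_\pm.\end{cases}
\]
Using the branch convention $\Im\sqrt{z}>0$ above the cut, one sees that on $\sigma_\pm^{u,l}$ the quotient $\sqrt{z-E}/\overline{\sqrt{z-E}}$ tends to $+1$ at a left band edge (where $\lambda-E>0$ is real) and to $-1$ at a right band edge (where $\sqrt{\lambda-E}$ is purely imaginary). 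Multiplying all signs, $R_\pm(E)=+1$ in either sub-case.

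The main obstacle is the sign bookkeeping at the band edges: the sign $\varepsilon$ from \eqref{behE2} must be paired with the correct limit of $\sqrt{\lambda-E}/\overline{\sqrt{\lambda-E}}$ on each of $\sigma_\pm^{u,l}$, with attention to whether $E$ is a left or right edge and to the chosen branch of the square root. The hypothesis $\hat W(E)\neq 0$ is essential: it guarantees that the Wronskian in the denominator stays bounded away from zero in the limit, so that the quotients above genuinely converge and $R_\pm$ takes the values $\mp 1$ or $+1$ claimed.
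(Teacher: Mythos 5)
Your argument is correct and follows essentially the same route as the paper: both regularize the Jost solutions by $\hat\delta_\pm$, use the hypothesis $\hat W(E)\neq 0$ to keep the denominator away from zero, and feed in the band-edge behaviour of $\phi_\pm\mp\overline{\phi_\pm}$ from Lemma~\ref{propphi} (i.e.\ the relations \eqref{behE}--\eqref{behE2}). The only difference is organizational: for $E\in\hat M_\pm$ the paper writes $R_\pm-1$ as a single Wronskian multiplied by $\hat\delta_{j,\pm}\to0$, so it only needs $\phi_\pm+\overline{\phi_\pm}=O(1)$, whereas your product decomposition requires the exact signs in \eqref{behE2} together with the choice of branch of $\sqrt{\lambda-E}$ on $\sigma_\pm^{\mathrm{u,l}}$ --- a sign bookkeeping you carry out correctly.
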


\begin{proof}
\begin{enumerate}
\item
At first it should be noted that by Lemma~\ref{lem:scat} the reflection coefficient is bounded, as $\frac{g_\pm(\la)}{g_\mp(\la)}>0$ for $\la\in\inte(\sigma^{(2)})$. Thus, using the corresponding properties of $\phi_\pm(z,x)$, finishes the first part.
\item
We proceed as in the proof of \cite[Lemma 3.3 III.(b)]{BET}.
By (\ref{defTR}) the reflection coefficient can be represented in the following form:
\beq\label{defR}
R_\pm(\la)=-\frac{W(\overline{\phi_\pm(\la)},\phi_\mp(\la))}{W(\phi_\pm(\la),\phi_\mp(\la))}=\pm\frac{W(\overline{\phi_\pm(\la)},\phi_\mp(\la))}{W(\la)},
\eeq
and is therefore continuous on both sides of the set $\inte(\sigma_\pm)\backslash (M_\mp \cup \hat{M}_\mp)$. 
Moreover,
\beq\nn
\vert R_\pm(\la)\vert =\left\vert \frac{W(\overline{\hat{\phi}_\pm(\la)},\hat{\phi}_\mp(\la))}{\hat{W}(\la)}\right\vert,
\eeq
where the denominator does not vanish, by assumption and hence $R_\pm(\la)$ is continuous on both sides of the spectrum in a small neighborhood of the band edges under consideration.

Next, let $E\in\{E_{2j-1}^\pm, E_{2j}^\pm\}$ with $\hat{W}(E)\not=0$. Then, if $E\not\in\hat{M}_\pm$, we can write 
\beq\nn
R_\pm(\la)=-1\mp\frac{\hat{\delta}_{j,\pm}(\la)W(\phi_\pm(\la)-\overline{\phi_\pm(\la)},\hat{\phi}_\mp(\la))}{\hat{W}(\la)},
\eeq
which implies $R_\pm(\la)\to -1$, since $\phi_\pm(\la)-\overline{\phi_\pm(\la)}\to 0$ by Lemma~\ref{propphi} as $\la\to E$. Thus we proved the first case. \\
If $E\in\hat{M}_\pm$ with $\hat{W}(E)\not =0$, we use (\ref{defR}) in the form
\beq\nn
R_\pm(\la)=1\pm\frac{\hat{\delta}_{j,\pm}(\la)W(\phi_\pm(\la)+\overline{\phi}_\pm(\la),\hat{\phi}_\mp(\la))}{\hat{W}(\la)},
\eeq
which yields $R_\pm(\la)\to 1$, since $\hat{\delta}_{j,\pm}(\la)\to 0$ and $\phi_\pm(\la)+\overline{\phi_\pm}(\la)=O(1)$ by Lemma~\ref{propphi} as $\la\to E$. This settles the second case.
\end{enumerate}
\end{proof}

\section{The Gel'fand-Levitan-Marchenko Equation}
The aim of this section is to derive the Gel'fand-Levitan-Marchenko (GLM) equation, which is also called the inverse scattering problem equation and to obtain some additional properties of the scattering data, as a consequence of the GLM equation.

Therefore consider the function 
\begin{align*}
G_\pm(z,x,y) &= T_\pm(z)\phi_\mp(z,x)\psi_\pm(z,y)g_\pm(z)-\breve{\psi}_\pm(z,x)\psi_\pm(z,y)g_\pm(z)\\ \nn
& := G^\prime_\pm(z,x,y)+G^{\prime\prime}_\pm(z,x,y), \quad \pm y >\pm x,
\end{align*}
where $x$ and $y$ are considered as fixed parameters. As a function of $z$ it is meromorphic in the domain $\C\backslash \sigma$ with simple poles at the points $\la_k$ of the discrete spectrum. It is continuous up to the boundary $\sigma^u\cup\sigma^l$, except for the points of the set, which consists of the band edges of the background spectra $\partial\sigma_+$ and $\partial\sigma_-$, where 
\beq\label{edgeG}
G_\pm(z,x,y)=O((z-E)^{-1/2}) \quad \text{ as } \quad E\in\partial\sigma_+\cup\partial\sigma_-.
\eeq

Outside a small neighborhood of the gaps of $\sigma_+$ and $\sigma_-$, the following asymptotics as $z\to\infty$ are valid:
\begin{align}\nn
\phi_\mp(z,x)&=\E^{\mp\I\sqrt{z}x(1+O(\frac{1}{z}))}\left(1+O(z^{-1/2})\right), \quad g_\pm(z)=\frac{-1}{2\I\sqrt{z}}+O(z^{-1}),\\ \nn 
\breve{\psi}_\pm(z,x)& =\E^{\mp\I\sqrt{z}x(1+O(\frac{1}{z}))}\left(1+O(z^{-1})\right),  \quad T_\pm(z)=1+O(z^{-1/2}),\\ \nn
\psi_\pm(z,y) & = \E^{\pm\I\sqrt{z}y(1+O(\frac{1}{z}))}\left(1+O(z^{-1})\right),
\end{align}
and the leading term of $\phi_\mp(z,x)$ and $\breve \psi_\pm(z,x)$ are equal, thus  
\beq\label{estG}
G_\pm(z,x,y)=\E^{\pm\I\sqrt{z}(y-x)(1+O(\frac{1}{z}))}O(z^{-1}), \quad \pm y > \pm x.
\eeq

\begin{figure}
\begin{picture}(7,5.2)
\put(1,2.6){\line(1,0){0.5}}
\put(2.3,2.6){\line(1,0){1.3}}
\put(4.2,2.6){\line(1,0){1.8}}

\put(3.0,2.75){\vector(1,0){0.1}}
\put(3.0,2.45){\vector(-1,0){0.1}}
\put(1.2,2.75){\vector(1,0){0.1}}
\put(1.2,2.45){\vector(-1,0){0.1}}
\put(4.6,2.45){\vector(-1,0){0.1}}
\put(4.6,2.75){\vector(1,0){0.1}}	

\curve(4.4,0.2,  4.6,0.8,  4.86,2.0,   4.892,2.3, 4.894,2.45, 4.6,2.45, 4.2,2.45, 4.05,2.6, 4.2,2.75, 4.6,2.75, 4.894,2.75, 4.892,2.9,   4.86,3.2,  4.6,4.4,  4.4,5.)
\curve( 2.15,2.6, 2.3,2.75,  2.6,2.75, 3.6,2.75, 3.75,2.6, 3.6,2.45, 2.6,2.45, 2.3,2.45, 2.15,2.6)
\curve(1.,2.75, 1.2,2.75, 1.5,2.75, 1.65,2.6, 1.5,2.45, 1.2,2.45, 1.,2.45)

\end{picture}
\caption{Contours $\Gamma_{\varepsilon,n}$}
\label{figure:solreg}
\end{figure}
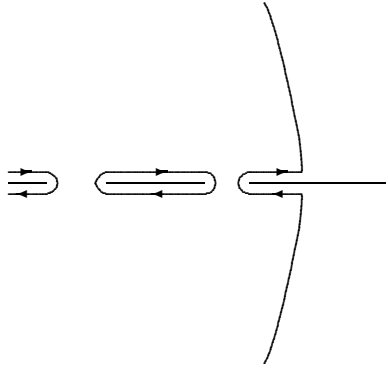

Consider the following sequence of contours $\Gamma_{\varepsilon,n,\pm}$, where $\Gamma_{\varepsilon,n,\pm}$ consists of two parts for every $n\in\N$ and $\varepsilon\geq0$:
\begin{enumerate}
\item
$C_{\varepsilon,n,\pm}$ consists of a part of a circle which is centered at the origin and has as radii the distance from the origin to the midpoint of the largest band of $[E_{2n}^\pm,E_{2n+1}^\pm]$, which lies inside $\sigma^{(2)}$, together with a part wrapping around the corresponding band of $\sigma$ at a small distance, which is at most $\varepsilon$, as indicated by figure 1.
\item
Each band of the spectrum $\sigma$, which is fully contained in $C_{\varepsilon,n,\pm}$, is surrounded by a small loop at a small distance from $\sigma$ not bigger than $\varepsilon$.
\end{enumerate}
W.l.o.g. we can assume that all the contours are non-intersecting. \\
Using the Cauchy theorem, we obtain 
\beq\nn
\frac{1}{2\pi\I}\oint_{\Gamma_{\varepsilon,n,\pm}} G_\pm(z,x,y)dz=\sum_{\la_k \in \inte(\Gamma_{\varepsilon,n,\pm})}\Res_{\la_k}G_\pm(z,x,y), \quad \varepsilon>0.
\eeq
By (\ref{edgeG}) the limit value of $G_\pm(z,x,y)$  as $\varepsilon\to 0$ is integrable on $\sigma$, and the function $G_\pm^{\prime\prime}(z,x,y)$ has no poles at the points of the discrete spectrum, thus we arrive at 
\beq\label{eqGn}
\frac{1}{2\pi\I}\oint_{\Gamma_{0,n,\pm}}G_\pm(z,x,y)dz=\sum_{\la_k \in \inte(\Gamma_{0,n,\pm})}\Res_{\la_k}G^\prime_\pm(z,x,y),\quad \pm y > \pm x.
\eeq
Estimate (\ref{estG}) allows us now to apply Jordan's lemma, when letting $n\to\infty$, and we therefore arrive, up to that point only formally, at
\beq\label{eqG}
\frac{1}{2\pi\I}\oint_{\sigma}G_\pm(\la,x,y)d\la=\sum_{\la_k\in\sigma_d}\Res_{\la_k}G_\pm^\prime(\la,x,y), \quad \pm y>\pm x.
\eeq

Next, note that the function $G_\pm^{\prime\prime}(\la,x,y)$ does not contribute to the left part of (\ref{eqG}), since $G_\pm^{\prime\prime}(\la^u,x,y)=G_\pm^{\prime\prime}(\la^l,x,y)$ for $\la\in\sigma_\mp^{(1)}$ and, hence	 $\oint_{\sigma_\mp^{(1)}}G_\pm^{\prime\prime}(\la,x,y)d\la=0$.  In addition, $\oint_{\sigma_\pm}G_\pm^{\prime\prime}(\la,x,y)d\la=0$ for $x\not =y$ by Lemma~\ref{lem1.1} (iv).

Therefore we arrive at the following equation,
\beq\label{integ2}
\frac{1}{2\pi\I}\oint_{\sigma_\pm}G_\pm^\prime(\la,x,y)d\la=\sum_{\la_k\in\sigma_d}\Res_{\la_k}G_\pm^\prime(\la,x,y), \quad \pm y>\pm x.
\eeq

To make our argument rigorous we have to show that the series of contour integrals along the parts of the spectrum contained in $C_{0,n,\pm}$ on the left hand side of \eqref{eqGn} converges as $n\to\infty$ and that the contribution of the integrals along the circles $C_{0,n,\pm}$ converges against zero as $n\to\infty$, by applying Jordan's lemma. This will be done next.   

Using (\ref{1.14}), (\ref{repphi}), (\ref{repolphi}), and (\ref{scatrel}), we obtain
\begin{align}\nn
\frac{1}{2\pi\I} & \oint_{\sigma_\pm} G_\pm^\prime(\la,x,y)d\la=
\oint_{\sigma_\pm}T_\pm(\la)\phi_\mp(\la,x)\psi_\pm(\la,y)d\rho_\pm(\la)\\ \nn 
& = \oint_{\sigma_\pm} \Big(R_\pm(\la)\phi_\pm(\la,x)+\overline{\phi_\pm(\la,x)}\Big)\psi_\pm(\la,y)d\rho_\pm(\la)\\ \nn
& =\oint_{\sigma_\pm}R_\pm(\la)\psi_\pm(\la,x)\psi_\pm(\la,y)d\rho_\pm(\la)+\oint_{\sigma_\pm} \breve{\psi}_\pm(\la,x)\psi_\pm(\la,y)d\rho_\pm(\la) \\ \nn
& \quad \pm\int_x^{\pm\infty} dt K_\pm(x,t)\Big(\oint_{\sigma_\pm} R_\pm(\la)\psi_\pm(\la,t)\psi_\pm(\la,y)d\rho_\pm(\la)+\delta(t-y)\Big)\\ \nn
& = F_{r,\pm}(x,y)\pm\int_x^{\pm\infty}K_\pm(x,t)F_{r,\pm}(t,y)dt+K_\pm(x,y), 
\end{align}
where
\begin{equation*}
F_{r,\pm}(x,y)=\oint_{\sigma_\pm} R_\pm(\la)\psi_\pm(\la,x)\psi_\pm(\la,y)d\rho_\pm(\la).
\end{equation*}

Now properties (ii) and (iii) from Lemma~\ref{lem:scat} imply that 
\begin{equation*}
\vert R_\pm(\la)\vert <1 \quad \text{for} \quad \la\in\inte(\sigma^{(2)}), \quad \vert R_\pm(\la)\vert=1 \quad \text{for} \quad \la\in\sigma_\pm^{(1)}.
\end{equation*}
and by (\ref{intrep}) we can write
\begin{align}\nn
 F_{r,\pm} (x,y) & =\oint_{\sigma_\pm}R_\pm(\la)\psi_\pm(\la,x)\psi_\pm(\la,y)d\rho_\pm(\la)\\  \nn
& = -\oint_{\sigma_\pm}R_\pm(\la)\frac{(G_\pm(\la,x)G_\pm(\la,y))^{1/2}}{2Y_\pm(\la)^{1/2}}\exp(\eta_\pm(\la,x)+\eta_\pm(\la,y))d\la,
\end{align}
with
\beq\nn
\eta_\pm(\la,x):=\pm\int_0^x \frac{Y_\pm(\la)^{1/2}}{G_\pm(\la,\tau)}d\tau \in\I\R.
\eeq

We are now ready to prove the following lemma.

\begin{lemma}\label{lem:Fr}
 The sequence of functions 
\begin{equation*}
 F_{r,n,\pm}(x,y)=\oint_{\sigma_\pm \cap \Gamma_{0,n,\pm}} R_\pm(\la)\psi_\pm(\la,x)\psi_\pm(\la,y) d\rho_\pm(\la),
\end{equation*}
is uniformly bounded with respect to $x$ and $y$, that means for all $n\in\N$, $\vert F_{r,n,\pm}(x,y)\vert \leq C$. Moreover, $F_{r,n,\pm}(x,y)$ converges uniformly as $n\to\infty$ to the function 
\begin{equation*}
 F_{r,\pm}(x,y) = \oint_{\sigma_\pm}  R_\pm(\la)\psi_\pm(\la,x)\psi_\pm(\la,y)d\rho_\pm(\la),
\end{equation*}
which is again uniformly bounded with respect to $x$ and $y$. In particular, $F_{r,\pm}(x,y)$ is continuous with respect to $x$ and $y$.
\end{lemma}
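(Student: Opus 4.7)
The plan is to decompose the defining integral into absolutely convergent regions plus an oscillatory tail, and estimate each piece uniformly in $(x,y)$ and in $n$. Using \eqref{intrep} and \eqref{1.88}, rewrite
\begin{equation*}
R_\pm(\la)\psi_\pm(\la,x)\psi_\pm(\la,y)\,d\rho_\pm(\la) = -\frac{R_\pm(\la)\,(G_\pm(\la,x)G_\pm(\la,y))^{1/2}}{4\pi\I\,Y_\pm^{1/2}(\la)}\,\E^{\eta_\pm(\la,x)+\eta_\pm(\la,y)}\,d\la;
\end{equation*}
on $\sigma_\pm$ the exponential factor has modulus one, and the algebraic prefactor equals $|\psi_\pm(\la,x)\psi_\pm(\la,y)\,g_\pm(\la)|$ up to a constant, which by the uniform boundedness of $|G_\pm(\la,x)/G_\pm(\la,0)|$ (established in Section~2) is controlled uniformly in $(x,y)$ on every compact subset of $\sigma_\pm$ bounded away from $\partial\sigma_\pm$.

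First I would split $\sigma_\pm = \sigma_\pm^{(1)}\cup\sigma^{(2)}$. On $\sigma_\pm^{(1)}$ one has $|R_\pm|=1$ by Theorem~\ref{lem:scat}(ii); since $\sigma_\pm^{(1)}$ sits inside the gaps of $\sigma_\mp$, whose total length is finite by Hypothesis~\ref{Hyp} (combining (i) for summability and (ii) for the growth of $E_{2j-1}^\mp$), this piece is a convergent integral with a uniform-in-$(x,y,n)$ majorant. Near each edge $E\in\partial\sigma_\pm$ the prefactor has a locally integrable $O((\la-E)^{-1/2})$ singularity; summability of the resulting edge contributions again follows from Hypothesis~\ref{Hyp}.

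The only non-trivial region is the bulk of $\sigma^{(2)}$ bounded away from band edges. There $R_\pm(\la)=O(|\la|^{-1/2})$ by Theorem~\ref{thm:asym}, so the integrand is only $O(|\la|^{-1})$ and not absolutely integrable at infinity. I would exploit the oscillation: the asymptotic $\eta_\pm(\la,x)+\eta_\pm(\la,y) = \pm\I\sqrt{\la}(x+y)(1+O(\la^{-1}))$ (coming from $\psi_\pm(\la,x)\sim \E^{\pm\I\sqrt{\la}x}$ at infinity), combined with an integration by parts in $\la$, trades the oscillation for a factor $(x+y)^{-1}\la^{-1/2}$, making the tail integrable. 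Legitimate differentiation of $R_\pm$, $G_\pm$ and $Y_\pm^{1/2}$ in $\la$ rests on the smoothness of $K_\pm$ obtained from the second-moment condition \eqref{moment} via \eqref{estpK}--\eqref{Kxx} and the representation \eqref{repphi}. In the regime $|x+y|\to 0$, the integration-by-parts gain degenerates; here one uses the symmetry $R_\pm(\lau)=\overline{R_\pm(\lal)}$ together with the sign flip of $Y_\pm^{1/2}$ across the cut to recognize that the combined upper/lower integrand of $\oint_{\sigma_\pm}$ gains an extra factor vanishing at $x+y=0$, again producing an integrable dominant. This yields $|F_{r,n,\pm}(x,y)|\le C$ with $C$ independent of $n$, $x$, and $y$.

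Uniform convergence $F_{r,n,\pm}\to F_{r,\pm}$ then follows by applying the same estimates to the complementary tail $\sigma_\pm\setminus\Gamma_{0,n,\pm}$, whose contribution tends to zero uniformly in $(x,y)$ as $n\to\infty$ by the summability arguments of Step~2 and the integration-by-parts bounds of Step~3. Continuity of each $F_{r,n,\pm}$ in $(x,y)$ is immediate from dominated convergence on the bounded portion of the contour, and continuity of $F_{r,\pm}$ transfers through the uniform limit. The main obstacle will be the oscillatory estimate on $\sigma^{(2)}$ for $|x+y|$ near zero: the integration-by-parts saving is lost there, and one must argue by the cross-cut symmetry of the contour in order to extract the needed cancellation; this is the technically delicate point of the proof.
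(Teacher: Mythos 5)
Your overall strategy coincides with the paper's proof step for step: the same rewriting of the integrand via \eqref{intrep} and \eqref{1.88}, the same three--region decomposition (neighborhoods of the band edges of $\sigma_\pm$, neighborhoods of the gaps of $\sigma_\mp$ inside $\sigma_\pm$, and the remaining bulk of $\sigma^{(2)}$), the same appeal to Hypothesis~\ref{Hyp} to sum the edge and gap contributions, and the same integration by parts against the phase $\E^{\pm\I\sqrt{\la}(x+y)}$ in the bulk using $R_\pm(\la)=O(\la^{-1/2})$ from Theorem~\ref{thm:asym}. All of that is sound and matches the paper.

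The one point where you depart from the paper is your proposed cure for the degenerate regime $x+y\to0$, and that step contains a genuine gap. You assert that combining the upper and lower sides of the cut produces ``an extra factor vanishing at $x+y=0$.'' Carrying out the combination with $R_\pm(\lal)=\overline{R_\pm(\lau)}$, $\psi_\pm(\lal,\cdot)=\overline{\psi_\pm(\lau,\cdot)}$ and $g_\pm(\lal)=-g_\pm(\lau)$ gives
\begin{equation*}
\oint_{\sigma_\pm}R_\pm\,\psi_\pm(\cdot,x)\psi_\pm(\cdot,y)\,d\rho_\pm=\frac{1}{\pi\I}\int_{\sigma_\pm^{\mathrm{u}}}g_\pm(\la)\,\Re\bigl(R_\pm(\la)\psi_\pm(\la,x)\psi_\pm(\la,y)\bigr)\,d\la ,
\end{equation*}
and at $x+y=0$ the real part reduces, up to bounded positive amplitude factors and a phase that is $o(1)$, to $\Re(R_\pm(\la))$. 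Nothing in Theorem~\ref{thm:asym} makes $\Re(R_\pm)$ decay faster than $O(\la^{-1/2})$, so the combined integrand is still only $O(\la^{-1})$ and no extra vanishing factor appears; convergence in this regime requires additional input on $R_\pm$ (e.g.\ its representation through the transformation operator kernels and a Fubini-type argument, as in Marchenko and Firsova), not merely the symmetry of the contour. For comparison, the paper does not use such a symmetry argument either: after the integration by parts it simply invokes the restriction $\pm x\le\pm y$ --- a remark that is itself terse at exactly this point, since that restriction does not keep $x+y$ away from zero. So you have correctly located the delicate spot, but the mechanism you propose to handle it does not work as stated.
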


\begin{proof}
For $\la\in\sigma_\pm$ as $\la\to\infty$ we have the following asymptotic behavior 
\begin{enumerate}
\item
in a small neighborhood $V_{n}^\pm$ of $E=E_{n}^\pm$
\beq\nn
\vert R_\pm(\la)\psi_\pm(\la,x)\psi_\pm(\la,y)g_\pm(\la)\vert =O\Big(\frac{\sqrt{E_{2j}^\pm-E_{2j-1}^\pm}}{\sqrt{\la(\la-E)}}\Big),
\eeq
\item
in a small neighborhood $W_n^\pm$ of $E=E_n^\mp$, if $E\in\sigma_\pm$
\beq\nn
 R_\pm(\la)\psi_\pm(\la,x)\psi_\pm(\la,y)g_\pm(\la)=O(\frac{1}{\sqrt{\la}}),
\eeq
\item
and for $\la\in\sigma_\pm\backslash\bigcup_{n\in\N}(V_{n}^\pm\cup W_{n}^\pm)$
\beq\label{eq:asymp1}
R_\pm(\la)\psi_\pm(\la,x)\psi_\pm(\la,y)g_\pm(\la)=\exp(\pm\I\sqrt{\la}(\vert x\vert +\vert y\vert )(1+O(\frac{1}{\la})))\Big(\frac{C}{\la}+O\Big(\frac{1}{\la^{3/2}}\Big)\Big).
\eeq
\end{enumerate}
These estimates are good enough to show that $F_{r,\pm}(x,y)$ exists, if we choose $V_n^\pm$ and $W_n^\pm$ in the following way: 
We choose $V_n^\pm\subset \sigma_\pm^{(1)}\cup \sigma^{(2)}$, if $E_n^\pm$ is a band edge of $\sigma_\pm^{(1)}$, such that $V_n^\pm$ consists of the corresponding band of $\sigma_\pm^{(1)}$ together with the following part of $\sigma_\pm^{(2)}$ with length $E_n^\pm-E_{n-1}^\pm$, if $n$ is even and $E_{n+1}^\pm-E_n^\pm$, if $n$ is odd. If $E_n^+$ is a band edge of $\sigma^{(2)}$, we choose $V_n^\pm\subset\sigma^{(2)}$, where the length of $V_n^\pm$ is equal to the length of the gap pf $\sigma_\pm$ next to it. 
We set $W_n^\pm\subset\sigma^{(2)}$ with length $3(E_n^\mp-E_{n-1}^\mp)$, if $n$ is even and $3(E_{n+1}^\mp-E_n^\mp)$, if $n$ is odd, centered at the midpoint of the corresponding gap in $\sigma_\mp$. As we are working in the Levitan class and we therefore know that $\sum_{n=1}^\infty (E_{2n-1}^\pm)^{l^\pm}(E_{2n}^\pm-E_{2n-1}^\pm)<\infty$ for some $l^\pm>1$, we obtain that the sequences belonging to $V_n^\pm$ and $W_n^\pm$ converge. 

As far as the behavior along the spectrum away from the band edges of $\sigma_+$ and $\sigma_-$ is concerned observe that 
\begin{equation*}
\vert \exp(\pm\I\sqrt{\la}(x+y)O(\frac{1}{\la}))\vert\leq 1+ (x+y)O(\frac{1}{\sqrt{\lambda}}),\quad \lambda\to\infty.
\end{equation*}
and therefore 
\begin{equation}
 R_\pm(\la)\psi_\pm(\la,x)\psi_\pm(\la,y)g_\pm(\la)=\exp(\pm\I\sqrt{\la}(x+y))\Big(\frac{C}{\la}+(1+x+y)O\Big(\frac{1}{\la^{3/2}}\Big)\Big).
\end{equation}

To show the convergence of the series $F_{r,\pm}(x,y)$ for fixed $x$ and $y$, we split the integral along the spectrum $\sigma$ up into three integrals along $\bigcup_{n\in\N} V_n^\pm$, $\bigcup_{n\in\N} W_n^\pm$, and $\sigma\backslash\bigcup_{n\in\N}(V_n^\pm\cup W_n^\pm)$ respectively.

As far as the integral along $\bigcup_{n\in\N} V_n^pm$ is concerned observe that the integrand has a square root singularity at the boundary and is therefore integrable along $V_n^\pm$ for all $n\in\N$. Since we are working within the Levitan class the sum over all $n\in\N$ converges.

The integrand can be uniformly bounded for all $\lambda \in W_n^\pm$ such that $\lambda\geq 1$. Since there are only finitely many $n\in\N$ such that $W_n^\pm\subset [0,1]$, the corresponding series converges by the definition of the Levitan class.  

Thus it is left to consider the integral along $\sigma\backslash\bigcup_{n\in\N}(V_n^\pm\cup W_n^\pm)$:=I.
Direct computation yields 
\begin{align}\nn
 \int_a^b \exp(\pm\I\sqrt{\la}(x+y))\frac{C}{\la}d\la& =\pm\exp(\pm\I\sqrt{\la}(x+y))\frac{2C}{\I\la^{1/2}(x+y)}\vert_a^b\\ \nn 
& \pm\int_a^b\exp(\pm\I\sqrt{\la}(x+y))\frac{C}{\I\la^{3/2}(x+y)}d\la,
\end{align}
which is finite since by assumption $\pm x\leq\pm y$.
Hence one possibility to see that the corresponding series of integrals converges is to integrate first the function describing the asymptotic behavior along $[E_0^\pm,\infty]$ and substract from it the series of integrals corresponding to the $[E_0^\pm,\infty]\cap I^c$. Since every interval belonging to the complement belongs to a neighborhood of the gaps of $\sigma^{(2)}$ and the integrand can be uniformly bounded, the definition of the Levitan class implies that this series converges. 

Similarly we conclude 
\begin{align}\nn
 \int_a^b \exp(\pm\I\sqrt{\la}(x+y))(x+y)O(\frac{1}{\la^{3/2}})d\la& =\exp(\pm\I\sqrt{\la}(x+y))O(\frac{1}{\la})\vert_a^b\\ \nn
& +\int_a^b \exp(\pm\I\sqrt{\la}(x+y))O(\frac{1}{\la^{2}})d\la
\end{align}

Note that since we are working within the Levitan class all estimates are independent of $x$ and $y$.
\end{proof}

For investigating the other terms, we will need the following lemma, which is taken from \cite{F1}:
\begin{lemma}\label{lem:F1}
 Suppose in an integral equation of the form
\beq\label{inteq}
f_\pm(x,y)\pm\int_x^{\pm\infty}K_\pm(x,t)f_\pm(t,y)dt=g_\pm(x,y), \quad \pm y>\pm x,
\eeq 
the kernel $K_\pm(x,y)$ and the function $g_\pm(x,y)$ are continuous for $\pm y>\pm x$, 
\beq\nn
\vert K_\pm(x,y) \vert \leq C_\pm(x) Q_\pm(x+y), 
\eeq 
and for $g_\pm(x,y)$ one of the following estimates hold
\beq\label{estg1}
\vert g_\pm(x,y)\vert \leq C_\pm(x)Q_\pm(x+y), \quad \text{ or }
\eeq
\beq\label{estg2}
\vert g_\pm(x,y) \vert \leq C_\pm(x)(1+\max (0,\pm x)).
\eeq
Furthermore assume that 
\begin{equation*}
\pm\int_0^{\pm\infty} (1+\vert x \vert^2)\vert q(x)-p_\pm(x)\vert dx<\infty. 
\end{equation*}
Then \eqref{inteq} is uniquely solvable for $f_\pm(x,y)$. The solution $f_\pm(x,y)$ is also continuous in the half-plane 
$\pm y >\pm x$, and for it the estimate \eqref{estg1} respectively \eqref{estg2} is reproduced.

Moreover, if a sequence $g_{n,\pm}(x,y)$ satisfies \eqref{estg1} or \eqref{estg2} uniformly with respect to $n$ and 
pointwise $g_{n,\pm}(x,y)\to 0$, for $\pm y>\pm x$, then the same is true for the corresponding sequence of solutions 
$f_{n,\pm}(x,y)$ of \eqref{inteq}.
\end{lemma}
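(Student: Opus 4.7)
The plan is a standard Volterra-type iteration, focusing on the $+$ case (the $-$ case is symmetric under $x \mapsto -x$). The key structural observation is that \eqref{inteq} is Volterra rather than Fredholm: the integration runs from $x$ to $+\infty$, which is one-sided, so I would define the Neumann iterates
\[
f_+^{(0)}(x,y) := g_+(x,y), \qquad
f_+^{(n+1)}(x,y) := -\int_x^{+\infty} K_+(x,t)\, f_+^{(n)}(t,y)\, dt,
\]
and seek $f_+(x,y) = \sum_{n\geq 0} f_+^{(n)}(x,y)$. Existence, continuity, and the reproduced estimates all follow once I show that this series converges absolutely and uniformly on compact subsets of $\{y>x\}$, with bounds of the required form.

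The main estimate uses the monotone decrease of $Q_+$, the boundedness of $C_+$, and a Fubini computation giving
\[
\int_{2x}^{\infty} Q_+(u)\, du \;\leq\; 2\int_{x}^{\infty} s\,|q(s)-p_+(s)|\, ds,
\]
so the first moment of $|q - p_+|$ (a consequence of the second moment hypothesis) bounds $\int_x^{\infty} C_+(t) Q_+(x+t)\, dt$. For case \eqref{estg1}, using $Q_+(t+y) \leq Q_+(x+y)$ for $t \geq x$ one factors $Q_+(x+y)$ out of each iterate and shows inductively that $|f_+^{(n)}(x,y)| \leq \tilde C_+(x)\, Q_+(x+y)\, \alpha(x)^n / n!$ for a continuous $\alpha(x)$ controlled by the moments, giving absolute convergence of the series. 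For case \eqref{estg2} the initial bound $|f_+^{(0)}| \leq C_+(x)(1 + \max(0,x))$ produces factors of $(1+t)$ inside the integral; here the full second moment $\int (1+x^2)|q - p_+|\, dx < \infty$, via the parallel identity $\int_{2x}^{\infty} (1+u) Q_+(u)\,du \leq 2\int_x^{\infty}(s + s^2)|q - p_+|(s)\,ds$, is exactly what keeps the iteration summable.

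Uniqueness follows because the difference of two solutions satisfies the homogeneous equation and so equals a tail of its own Neumann iterates, which vanishes by the same convergence estimates. Continuity of $f_+$ on $\{y > x\}$ is inherited from the continuous iterates through uniform convergence. For the stability claim about sequences $g_{n,+} \to 0$ pointwise with uniform bounds, one writes $f_{n,+}$ as the Neumann series applied to $g_{n,+}$; the uniform bounds provide an $n$-independent majorant series, so dominated convergence in each iterate combined with majorant summability yields $f_{n,+}(x,y) \to 0$ pointwise while preserving the uniform bound. The main obstacle will be verifying the Fubini-type estimates with the correct constants so that the iteration bounds close up and $Q_+(x+y)$ (respectively $(1 + |x|)$) can be pulled cleanly out of each iterate; beyond this, everything is routine.
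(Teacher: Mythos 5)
The paper does not actually prove this lemma: its ``proof'' is the single line ``For a proof we refer to [F1, Lemma 6.3]'', so there is no argument in the text to compare against. Your Neumann/successive-approximation scheme is precisely the standard proof of that cited result, and your sketch is correct: the one point that genuinely requires care is the combinatorial gain of $1/n!$ in the iterates, which does not follow from the naive bound $Q_\pm(t_{j-1}+t_j)\leq Q_\pm(2t_{j-1})$ but does follow either from symmetrizing the simplex integral after the substitution $Q_\pm(t_{j-1}+t_j)\leq Q_\pm(x+t_j)$, or from inducting with the auxiliary function $\zeta_x(t)=\int_t^{\infty}Q_\pm(x+s)\,ds$ whose derivative reproduces the kernel factor; you correctly identify this as the place where the constants must close up, and your Fubini identity $\int_{2x}^{\infty}Q_\pm(u)\,du=2\int_x^{\infty}(s-x)\,\vert q(s)-p_\pm(s)\vert\,ds$ (together with its first-moment analogue for the case \eqref{estg2}, which is where the second moment condition enters) is exactly what makes $\eta(x)=\int_{2x}^\infty Q_\pm(u)\,du$ finite. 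The uniqueness, continuity, and stability arguments via the homogeneous equation, uniform convergence of continuous iterates, and dominated convergence against the $n$-independent majorant series are all sound, so your proposal supplies a complete and correct route to a statement the paper only outsources.
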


\begin{proof}
 For a proof we refer to \cite[Lemma 6.3]{F1}.
\end{proof}

\begin{remark}
 An immediate consequence of this lemma is the following. If $\vert g_\pm(x,y)\vert \leq C_\pm(x)$, where $C_\pm(x)$ denotes a bounded function, then $\vert g_\pm(x,y)\vert\leq C_\pm(x)(1+\max (0,\pm x))$ and therefore $\vert f_\pm(x,y)\vert \leq C_\pm(x)(1+\max (0,\pm x))$. Rewriting this integral equation as follows
\begin{equation*}
 f_\pm(x,y)=g_\pm(x,y)\mp\int_x^{\pm\infty}K_\pm(x,t)f_\pm(t,y)dy,
\end{equation*}
we obtain that the absolute value of the right hand side is smaller than a bounded function $\tilde C_\pm(x)$ by using \eqref{moment} and \eqref{estK}, and hence the same is true for the left hand side. In particular if $C_\pm(x)$ is a decreasing function the same will be true for $\tilde C_\pm(x)$.
\end{remark}

We will now continue the investigation of our integral equation.

\begin{lemma}\label{lem:Fh}
 The sequence of functions 
\begin{equation*}
 F_{h,n,\pm}(x,y)=\int_{\sigma_\pm^{(1),u}\cap\Gamma_{0,n,\pm}} \vert T_\mp(\la)\vert ^2 \psi_\pm(\la,x)\psi_\pm(\la,y)d\rho_\mp(\la)
\end{equation*}
is uniformly bounded, that means for all $n\in\N$, $\vert F_{h,n,\pm}(x,y)\vert \leq C_\pm(x)$,
where $C_\pm(x)$ are monotonically decrasing functions as $x\to\pm\infty$. Moerover, $F_{h,n,\pm}(x,y)$ converges uniformly as $n\to\infty$ to the function
\begin{equation*}
F_{h,\pm}(x,y) =\int_{\sigma_\mp^{(1),u}}   \vert T_\mp(\la) \vert ^2 \psi_\pm(\la,x)\psi_\pm(\la,y) d\rho_\mp(\la), 
\end{equation*}
which is again bounded by some monotonically increasing function. In particular, $F_{h,\pm}(x,y)$ is continuous with respect to $x$ and $y$.
\end{lemma}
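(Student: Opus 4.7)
The plan is to mirror the structure of the proof of Lemma \ref{lem:Fr}, which was established for $F_{r,\pm}$, but exploiting the crucial additional feature that on $\sigma_\mp^{(1)}$ the variable $\la$ lies in $\R\setminus\sigma_\pm$, so $\psi_\pm(\la,\cdot)\in L^2(\R_\pm)$ and decays exponentially as $\pm x\to+\infty$. This is precisely what should produce the monotonically decreasing bound $C_\pm(x)$ in $x$. I would write $\psi_\pm(\la,x)$ using the representation \eqref{intrep}, where for $\la\in\sigma_\mp^{(1)}\subset\R\setminus\sigma_\pm$ the factor $Y_\pm^{1/2}(\la)/G_\pm(\la,\tau)$ is real with definite sign (depending on the gap), yielding exponential decay with an explicit rate controlled by the distance of $\la$ to $\sigma_\pm$.

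Next I would collect asymptotic estimates of the integrand $|T_\mp(\la)|^2\psi_\pm(\la,x)\psi_\pm(\la,y) g_\mp(\la)$ in three regions, analogous to the three cases in the proof of Lemma \ref{lem:Fr}: (a) in small neighborhoods $V_n^\mp$ of the band edges $E_n^\mp\in\partial\sigma_\mp^{(1)}$ the factor $g_\mp(\la)$ supplies an integrable square root singularity, and $|T_\mp|^2$ is bounded by Theorem \ref{thm:TW} (away from zeros of $\hat W$) and controlled through Theorem \ref{thm:pR} near those points; (b) in neighborhoods $W_n^\mp$ of band edges $E_n^\pm\in\sigma_\mp^{(1)}$ of the other background, $\psi_\pm$ is bounded and decaying, while $g_\mp(\la)=O(1/\sqrt\la)$; (c) in the bulk region away from band edges, Theorem \ref{thm:asym} gives $|T_\mp(\la)|^2=1+O(\la^{-1/2})$ and $\psi_\pm(\la,x)=O(\E^{-c_\la |x|})$ with $c_\la>0$, so that the integrand is of order $\E^{-c_\la(|x|+|y|)}/\la$ at infinity.

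The summability of the corresponding sequence of contour contributions is then handled using the Levitan class condition $\sum (E_{2n-1}^\mp)^{l^\mp}(E_{2n}^\mp-E_{2n-1}^\mp)<\infty$ exactly as before: one splits the spectral integral into a sum of integrals over the bands of $\sigma_\mp^{(1)}\cap\Gamma_{0,n,\pm}$, the square root singularities at band edges give contributions bounded by $\sqrt{E_{2n}^\mp-E_{2n-1}^\mp}$ (times a factor uniform in $x,y$ thanks to the exponential decay of $\psi_\pm$), and the Levitan hypothesis forces absolute convergence. Uniform boundedness in $n$ and the existence of the limit $F_{h,\pm}(x,y)$ follow at once, and the decay of $\psi_\pm(\la,x)$ in $\pm x$ lets us factor out a monotonically decreasing function $C_\pm(x)$; continuity in $x,y$ is a consequence of the dominated convergence principle applied to the uniformly convergent series.

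The main obstacle, as in the previous lemma, is to make the decay rate $c_\la$ quantitative enough to preserve summability: near the boundary of $\sigma_\mp^{(1)}$ adjacent to $\sigma_\pm$ the exponent $c_\la$ may vanish, so one cannot rely on exponential decay alone there and must instead use the integrable square root behavior of $g_\mp$ together with the Levitan summability. A secondary technical point is to verify that $|T_\mp(\la)|^2$ remains bounded on the whole of $\sigma_\mp^{(1)}$; potential zeros of $\hat W$ at $\partial\sigma_+\cap\partial\sigma_-$ are excluded by the generic assumption in Theorem \ref{thm:TW}, and at interior points of $\sigma_\mp^{(1)}$ the function $\hat W$ does not vanish by the same theorem, so $T_\mp$ stays continuous and bounded.
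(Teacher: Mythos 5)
Your overall strategy (direct estimation of the integrand $|T_\mp|^2\psi_\pm(\la,x)\psi_\pm(\la,y)g_\mp(\la)$ band by band over $\sigma_\mp^{(1)}$) is not the one the paper uses, and it runs into two concrete obstructions that the paper's argument is specifically built to avoid. First, you assert that $|T_\mp|^2$ stays bounded on $\sigma_\mp^{(1)}$ because zeros of $\hat W$ at band edges are ``excluded by the generic assumption in Theorem~\ref{thm:TW}''. No such assumption is made: Theorem~\ref{thm:TW}~(ii) explicitly \emph{allows} $\hat W$ to vanish at points of $\partial\sigma\cup(\partial\sigma_+^{(1)}\cap\partial\sigma_-^{(1)})$, with $\hat W(z)=\sqrt{z-E}\,(C(E)+o(1))$, and at such a point $T_\mp=-1/(Wg_\mp)$ blows up; moreover Theorem~\ref{thm:asym} gives $T_\mp=1+O(\la^{-1/2})$ only on $\sigma^{(2)}$ away from band edges, so it says nothing on $\sigma_\mp^{(1)}$, which for large $\la$ consists entirely of shrinking bands sitting inside gaps of $\sigma_\pm$ --- there is no ``bulk region'' of the kind you use in case (c). Second, the claimed bound $|F_{h,n,\pm}(x,y)|\le C_\pm(x)$ must hold for \emph{all} $x$, and for $\la\in\sigma_\mp^{(1)}$, i.e.\ $\la$ in a gap of $\sigma_\pm$, the solution $\psi_\pm(\la,x)$ grows exponentially as $x\to\mp\infty$; you only discuss the degeneration of the decay rate $c_\la$ near $\partial\sigma_\pm$, not this growth, which would have to be controlled uniformly over the infinitely many bands for the series to remain summable.

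The paper circumvents both problems by never estimating $\psi_\pm(\la,\cdot)$ or $T_\mp$ directly on $\sigma_\mp^{(1)}$. It uses the scattering relation to write $T_\mp\phi_\pm=\overline{\phi_\mp}+R_\mp\phi_\mp$ with $|R_\mp|=1$ on $\sigma_\mp^{(1)}$, so the auxiliary integral $\oint_{\sigma_\mp^{(1)}}T_\pm\phi_\mp(\la,x)\phi_\pm(\la,y)g_\pm\,d\la$ is controlled by $|\phi_\mp(\la,x)\phi_\mp(\la,y)g_\mp(\la)|$, which is bounded on the spectrum of $L_\mp$ and has only integrable square-root singularities at $\partial\sigma_\mp^{(1)}$; the Levitan condition then yields the bound $C_\pm(x)C_\pm(y)$ after summing over the bands. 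The passage from this Jost-solution integral to $F_{h,\pm}$, which involves $\psi_\pm(\la,x)\psi_\pm(\la,y)$, is accomplished by the identity $\overline{\phi_\mp}/\overline{W}-\phi_\mp/W=|T_\mp|^2 g_\mp\phi_\pm$ together with applications of the Firsova integral-equation Lemma~\ref{lem:F1}, which transfers the bound through the transformation operator $K_\pm$. If you wish to keep a direct approach, you would at minimum need quantitative bounds on $|T_\mp|^2g_\mp$ near possible resonances and on the exponential growth rate of $\psi_\pm(\la,x)$ in the $j$-th gap in terms of its length; as written, the proposal has genuine gaps.
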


\begin{proof}
On the set $\sigma_\mp^{(1)}$ both the numerator and the denominator of the function $G_\pm^\prime(\la,x,y)$ have poles (resp. square root singularities) at the points of the set $\sigma_\mp^{(1)}\cap(M_\pm\cup(\partial\sigma_+^{(1)}\cap\partial\sigma_-^{(1)}))$ (resp. $\sigma_\mp^{(1)}\cap(M_\mp\backslash(M_\mp\cap M_\pm))$ , but multiplying them, if necessary away, we can avoid singularities.
Hence, w.l.o.g., we can suppose $\sigma_\mp^{(1)}\cap(M_{r,+}\cup M_{r,-})=\emptyset$. 
Thus we can write 
\begin{equation*}
\frac{1}{2\pi\I}\oint_{\sigma_\mp^{(1)}}G_\pm^\prime(\la,x,y)d\la
=\frac{1}{2\pi\I}\oint_{\sigma_\mp^{(1)}}T_\pm(\la)\phi_\mp(\la,x)\psi_\pm(\la,y)g_\pm(\la)d\la.
\end{equation*}

For investigating this integral we will consider, using (\ref{scatrel}),
\begin{align*}
 \frac{1}{2\pi\I}\oint_{\sigma_\mp^{(1)}}T_\mp(\la) &\phi_\mp(\la,x)\phi_\pm(\la,y)g_\mp(\la)d\la \\ \nn
& = \frac{1}{2\pi\I}\oint_{\sigma_\mp^{(1)}}\phi_\mp(\la,x)\Big(\overline{\phi_\mp(\la,y)}
+R_\mp(\la)\phi_\mp(\la,y)\Big)g_\mp(\la)d\la.
\end{align*}
First of all note that the integrand, because of the representation on the right hand side, can only have square root singularities at the boundary $\partial\sigma_\mp^{(1)}$ and we therefore have  
\begin{align}\nn
\int_{\sigma_\mp^{(1)}\cap [E_{2n-1}^\pm,E_{2n}^\pm]}& \vert \phi_\mp(\la,x) \Big(\overline{\phi_\mp(\la,y)}+ R_\mp(\la)\phi_\mp(\la,y)\Big)g_\mp(\la)\vert d\la\\ \nn
 & \leq 2\int_{\sigma_\mp^{(1)}\cap [E_{2n-1}^\pm, E_{2n}^\pm]}\vert \phi_\mp(\la,x)\phi_\mp(\la,y)g_\mp(\la)\vert d\la \\ \nn 
& \leq C_\pm(y)C_\pm(x)\left( \frac{(E_{2n}^\pm-E_{2n-1}^\pm)}{\sqrt{\la-E_0^\mp}}+\frac{\sqrt{E_{2n}^\pm-E_{2n-1}^\pm}}{\sqrt{\la-E_0^\mp}}\right),
\end{align}
where $E_{2n-1}^\pm$ and $E_{2n}^\pm$ denote the edges of the gap of $\sigma_\pm$ in which the corresponding part of $\sigma_\mp^{(1)}$ lies and $C_\pm(x)$ denote monotonically decreasing functions from now on. Therefore as we are working in the Levitan class and by separating $\sigma_\mp^{(1)}$ into the different parts, one obtains that 
\begin{equation*}
\vert\frac{1}{2\pi\I}\oint_{\sigma_\mp^{(1)}}T_\pm(\la)\phi_\mp(\la,x)\phi_\pm(\la,y)g_\pm(\la)d\la\vert\leq C_\pm(y)C_\pm(x).
\end{equation*}
Thus we can now apply Lemma~\ref{lem:F1}, and hence
\begin{equation*}
\vert \frac{1}{2\pi\I}\oint_{\sigma_\mp^{(1)}}T_\pm(\la)\phi_\mp(\la,x)\psi_\pm(\la,y)g_\pm(\la)d\la\vert\leq C_\pm(y)C_\pm(x)(1+\max(0,\pm y)).
\end{equation*}
Note that we especially have, because of (\ref{estK}),   
\begin{equation*}
\vert\frac{1}{2\pi\I}\oint_{\sigma_\mp^{(1)}}T_\pm(\la)\phi_\mp(\la,x)\phi_\pm(\la,y)g_\pm(\la)d\la\vert\leq  C_\pm(x)
\end{equation*}
Therefore we can conclude that for fixed $x$ and $y$ the left hands side of (\ref{integ2}) exists and satisfies 
\begin{equation*}
\vert \frac{1}{2\pi\I}\oint_{\sigma_\mp^{(1)}}T_\pm(\la)\phi_\mp(\la,x)\psi_\pm(\la,y)g_\pm(\la)d\la\vert\leq C_\pm(x),
\end{equation*}
and hence 
\begin{equation*}
\vert \frac{1}{2\pi\I}\oint_{\sigma_\mp^{(1)}}G'_\pm(z,x,y)dz\vert\leq C_\pm(x). 
\end{equation*}

We will now rewrite the integrand in a form suitable for our further purposes. 
Namely, since $\psi_\pm(\la,x)\in\R$ as $\la\in\sigma_\mp^{(1)}$, we have 
\begin{equation*}
\frac{1}{2\pi\I}\oint_{\sigma_\mp^{(1)}}G_\pm^\prime(\la,x,y)d\la=\frac{1}{2\pi\I}\int_{\sigma_\mp^{(1),u}}\psi_\pm(\la,y)
\Big(\frac{\overline{\phi_\mp(\la,x)}}{\overline{W(\la)}}-\frac{\phi_\mp(\la,x)}{W(\la)}\Big)d\la
\end{equation*}
Moreover, (\ref{scatrel}) and Lemma~\ref{lem:scat} (ii) imply 
\beq\nn
\overline{\phi_\mp(\la,x)}=T_\mp(\la)\phi_\pm(\la,x)-\frac{T_\mp(\la)}{\overline{T_\mp(\la)}}.
\eeq
Therefore,
\begin{align}\label{phiW}
\frac{\phi_\mp(\la,x)}{W(\la)}-\frac{\overline{\phi_\mp(\la,x)}}{\overline{W(\la)}}& =\phi_\mp(\la,x)\Big(\frac{1}{W(\la)}
+\frac{T_\mp(\la)}{\overline{T_\mp(\la)W(\la)}}\Big)-\frac{T_\mp(\la)\phi_\pm(\la,x)}{\overline{W(\la)}}\\ \nn
& =\phi_\mp(\la,x)\frac{2\Re (T_\mp^{-1}(\la)\overline{W(\la)})T_\mp(\la)}{\vert W(\la)\vert^2}-\frac{T_\mp(\la)\phi_\pm(\la)}{\overline{W(\la)}}.
\end{align}
But by (\ref{eq:TgW}) 
\begin{equation*}
T_\mp^{-1}(\la)\overline{W(\la)}=\vert W(\la)\vert^2g_\mp(\la)\in\I\R, \quad \text{ for }\la\in\sigma_\mp^{(1)},
\end{equation*}
and therefore the first summand of  (\ref{phiW}) vanishes. Using now $\overline{W}=(\overline{T_\mp}g_\mp)^{-1}$ we arrive at 
\begin{equation*}
\frac{\overline{\phi_\mp(\la,x)}}{\overline{W(\la)}}-\frac{\phi_\mp(\la,x)}{W(\la)}=\vert T_\mp(\la)\vert^2g_\mp(\la)\phi_\pm(\la,x)
\end{equation*}
and hence
\begin{equation*}
\frac{1}{2\pi\I}\oint_{\sigma_\mp^{(1)}}G_\pm^\prime(\la,x,y)d\la=F_{h,\pm}(x,y)\pm\int_x^{\pm\infty}K_\pm(x,t)F_{h,\pm}(t.y)dt,
\end{equation*}
where
\begin{equation*}
F_{h,\pm}(x,y)=\int_{\sigma_\pm^{(1),u}}\vert T_\mp(\la)\vert^2\psi_\pm(\la,x)\psi_\pm(\la,y)d\rho_\mp(\la),
\end{equation*}
and 
\begin{equation*}
\vert F_{h,\pm}(x,y)\vert \leq C_\pm(x)C_\pm(y)
\end{equation*}
by Lemma~\ref{lem:F1}. The partial sums $F_{h,n,\pm}(x,y)$ can be investigated similarly
\end{proof}

We will now investigate the r.h.s. of (\ref{eqGn}) and \eqref{integ2}. Therefore we consider first the question of the existence of the right hand side: 

To prove the boundedness of the corresponding series on the left hand side, it is left to investigate the series, which correspond to the circles. We will derive the necessary estimates only for the part of the n'th circle $K_{R_{n,\pm}}$, where $R_{n,\pm}$ denotes the radius, in the upper half plane as the part in the lower half plane can be considered similarly. We have 
\begin{align}\nn
 \vert \int_{K_{R_{n},\pm}}G_\pm(z,x,y)dz\vert  
& \leq \int_0^\pi C \E^{\pm\sqrt{R}(x-y)(1-\nu)\sin(\theta/2)}d\theta \\ \nn
& \leq \int_0^{\pi/2} C \E^{\pm\sqrt{R}(x-y)(1-\nu)\sin(\eta)} d\eta\\ \nn 
& \leq \int_0^{\pi/2} C \E^{\pm\sqrt{R}(x-y)(1-\nu) 2\frac{\eta}{\pi}}d\eta\\ \nn
&\leq C\frac{1}{\sqrt{R}(x-y)(1-\nu)}\E^{\pm\sqrt{R}(x-y)(1-\nu)2\frac{\eta}{\pi}}\vert_0^{\frac{\pi}{2}}, 
\end{align}
where $C$ and $\nu$ denote some constant, which are dependent on the radius (cf. Lemma~\ref{lempsi}).
Therefore as already mentioned the part belonging to the circles converges against zero and hence the same is true for the corresponding series, by Jordan's lemma.
 
Thus we obtain that the sequence of partial sums on the right hand side of \eqref{eqGn} and \eqref{integ2} is uniformly bounded and we are therefore ready to prove the following result.

\begin{lemma}\label{lem:Fd}
 The sequence of functions
\begin{equation*}
 F_{d,n,\pm}(x,y)=\sum_{\la_k\in\sigma_d\cap \Gamma_{0,n,\pm}}(\gamma_k^\pm)^2\tilde \psi_\pm(\la_k,x)\tilde \psi_\pm(\la_k,y)
\end{equation*}
is uniformly bounded, that means for all $n\in\N$, 
$\vert F_{d,n,\pm}(x,y)\vert \leq C_\pm(x)$,
where $C_\pm(x)$ are monotonically increasing functions. Moreover, $F_{d,n,\pm}(x,y)$ converges uniformly as $n\to\infty$ to the function
\begin{equation*}
 F_{d,\pm}(x,y) = \sum_{\la_k\in\sigma_d}(\gamma_k^\pm)^2\tilde \psi_\pm(\la_k,x)\tilde \psi_\pm(\la_k,y), 
\end{equation*}
which is again bounded by some monotonically increasing function. In particular, $F_{d,\pm}(x,y)$ is continuous with respect to $x$ and $y$.  
\end{lemma}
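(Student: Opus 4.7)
The plan is to identify the partial sum $F_{d,n,\pm}(x,y)$ with the right-hand side of the residue formula \eqref{eqGn}, and then exploit the integral equation machinery of Lemma~\ref{lem:F1} in conjunction with the boundedness and uniform convergence of the contour integrals already established in Lemmas~\ref{lem:Fr} and \ref{lem:Fh}.

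First I would compute $\Res_{\la_k} G_\pm'(z,x,y)$ at each discrete eigenvalue $\la_k\in\sigma_d$. By Theorem~\ref{thm:TW}(i) the function $\tilde W(z)$ has a simple zero at $\la_k$, so $T_\pm(z)=-1/(W(z)g_\pm(z))$ contributes a simple pole to $G_\pm'(z,x,y)=T_\pm(z)\phi_\mp(z,x)\psi_\pm(z,y)g_\pm(z)$, while the possible poles of $\phi_\pm(z,x)$ at Dirichlet eigenvalues are neutralized once one passes to the regularization $\tilde\phi_\pm$. Combining the linear dependence $\tilde\phi_\pm(\la_k,\cdot)=c_\pm\tilde\phi_\mp(\la_k,\cdot)$ with $c_-c_+=1$ together with the norming-constant identity $(d\tilde W/dz)^2(\la_k)=(\gamma_k^+\gamma_k^-)^{-2}$ from \eqref{dertiW}, a direct computation yields
\begin{equation*}
\Res_{\la_k}G_\pm'(z,x,y)=(\gamma_k^\pm)^2\,\tilde\phi_\pm(\la_k,x)\,\tilde\psi_\pm(\la_k,y).
\end{equation*}
Inserting the representation $\tilde\phi_\pm(\la_k,x)=\tilde\psi_\pm(\la_k,x)\pm\int_x^{\pm\infty}K_\pm(x,t)\tilde\psi_\pm(\la_k,t)\,dt$ coming from \eqref{repphi}, the finite residue sum in \eqref{eqGn} becomes
\begin{equation*}
\sum_{\la_k\in\sigma_d\cap\Gamma_{0,n,\pm}}\Res_{\la_k}G_\pm'(z,x,y)=F_{d,n,\pm}(x,y)\pm\int_x^{\pm\infty}K_\pm(x,t)\,F_{d,n,\pm}(t,y)\,dt.
\end{equation*}

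By \eqref{eqGn} this expression equals $\frac{1}{2\pi\I}\oint_{\Gamma_{0,n,\pm}}G_\pm(z,x,y)\,dz$. Lemmas~\ref{lem:Fr} and \ref{lem:Fh}, together with the Jordan-type estimate on the circular arcs $C_{0,n,\pm}$ that was verified just before the lemma statement, show that this contour integral is bounded uniformly in $n$ by a function $C_\pm(x)$ and converges uniformly as $n\to\infty$. Thus, setting $g_{n,\pm}(x,y):=\frac{1}{2\pi\I}\oint_{\Gamma_{0,n,\pm}}G_\pm(z,x,y)\,dz$, we have for each $n$ an integral equation of the form \eqref{inteq} for $F_{d,n,\pm}$ whose right-hand side $g_{n,\pm}$ satisfies \eqref{estg2} uniformly in $n$. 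Lemma~\ref{lem:F1} then yields a uniform bound $|F_{d,n,\pm}(x,y)|\leq C_\pm(x)(1+\max(0,\pm x))$. Applying the same lemma to the differences $F_{d,n,\pm}-F_{d,m,\pm}$, whose driving function $g_{n,\pm}-g_{m,\pm}$ tends to zero uniformly in $(x,y)$, I conclude that $F_{d,n,\pm}$ converges uniformly to a limit $F_{d,\pm}$; continuity of $F_{d,\pm}$ then follows from the continuity of each (finite) partial sum and the uniformity of the convergence.

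The main obstacle is the residue computation: one has to check, case by case according to the position of $\la_k$ relative to $M_\pm$, $\breve M_\pm$ and $\hat M_\pm$, that the regularization factors $\delta_\pm(\la_k)$ and $\hat\delta_\pm(\la_k)$ combine cleanly with the simple pole of $T_\pm$ and with the weight $g_\pm$ to produce exactly the factor $(\gamma_k^\pm)^2$, rather than a messier expression depending on the local type of $\la_k$. The identity $c_-c_+=1$ together with \eqref{dertiW} is the algebraic input that makes these cases collapse into the stated formula; once this is verified, the rest of the argument is a straightforward transfer of estimates from the contour side to the discrete side via Lemma~\ref{lem:F1}.
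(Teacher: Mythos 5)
Your proposal follows essentially the same route as the paper: compute $\Res_{\la_k}G_\pm'$ via the linear dependence $\tilde\phi_\pm=c_\pm\tilde\phi_\mp$, the identity $c_+c_-=1$ and \eqref{cga}/\eqref{dertiW}, rewrite the residue sum through the transformation operator as $F_{d,n,\pm}\pm\int_x^{\pm\infty}K_\pm(x,t)F_{d,n,\pm}(t,y)\,dt$, and then invoke Lemma~\ref{lem:F1} together with the already-established bounds on the contour integrals to get uniform boundedness and uniform convergence. The only discrepancy is an overall sign in the residue formula (the paper gets $-(\gamma_k^\pm)^2\tilde\phi_\pm\tilde\psi_\pm$), which is immaterial for the boundedness and convergence claims; your explicit Cauchy-sequence argument for the partial sums is a slightly more detailed version of what the paper dismisses as ``analogous.''
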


\begin{proof}
Applying (\ref{repphi}), (\ref{deftihW}), (\ref{deftiphi}), (\ref{lidep}), and (\ref{cga})to the right hand side of \eqref{integ2}, yields
\begin{align}\nn
\sum_{\la_k\in\sigma_d} \Res_{\la_k}G_\pm^\prime(\la,x,y)
&=-\sum_{\la_k\in\sigma_d}\Res_{\la_k}\frac{\tilde{\phi}_\mp(\la,x)\tilde{\psi}_\pm(\la,y)}{\tilde{W}(\la)} \\ \nn
& = -\sum_{\la_k\in\sigma_d}\frac{\tilde{\phi}_\pm(\la_k,x)\tilde{\psi}_\pm(\la_k,y)}{\tilde{W}^\prime(\la_k)c_{k,\pm}}\\ \nn
& =-\sum_{\la_k\in\sigma_d}(\gamma_k^\pm)^2\tilde{\phi}_\pm(\la_k,x)\tilde{\psi}_\pm(\la_k,y) \\ \nn
&= -F_{d,\pm}(x,y)\mp\int_x^{\pm\infty}K_\pm(x,t)F_{d,\pm}(t,y)dt,
\end{align}
where 
\begin{equation*}
F_{d,\pm}(x,y):=\sum_{\la_k\in\sigma_d} (\gamma_k^\pm)^2\tilde{\psi}_\pm(\la_k,x)\tilde{\psi}_\pm(\la_k,y).
\end{equation*}

Thus we obtained the following integral equation,
\begin{align}\label{eqFd}
F_{d,\pm}(x,y)& = -K_\pm(x,y)-F_{c,\pm}(x,y)\mp\int_x^{\pm\infty}K_\pm(x,t)F_{c,\pm}(t,y)dt\\ \nn
& \mp\int_x^{\pm\infty}K_\pm(x,t)F_{d,\pm}(t,y)dt,
\end{align}
which we can now solve for $F_{d,\pm}(x,y)$ using again Lemma~\ref{lem:F1} and hence $F_{d,\pm}(x,y)$ exists and satisfies the given estimates.
The corresponding partial sums can be investigated analogously using the considerations from above. 
\end{proof}

Putting everything together, we see that we have obtained the GLM equation.
\begin{theorem}
The GLM equation has the form 
\beq\label{GLM}
K_\pm(x,y)+F_\pm(x,y)\pm\int_x^{\pm\infty}K_\pm(x,t)F_\pm(t,y)dt=0, \quad \pm(y-x)>0,
\eeq
where
\begin{align}\label{defFpm}
F_\pm(x,y)& = \oint_{\sigma_\pm}R_\pm(\la)\psi_\pm(\la,x)\psi_\pm(\la,y)d\rho_\pm(\la) \\ \nn
& +\int_{\sigma_\mp^{(1),u}}  \vert T_\mp(\la)\vert^2 \psi_\pm(\la,x)\psi_\pm(\la,y)d\rho_\mp(\la) \\ \nn
& +\sum_{k=1}^\infty (\gamma^\pm_k)^2 \tilde\psi_\pm(\la_k,x)\tilde\psi_\pm(\la_k,y).
\end{align}
\end{theorem}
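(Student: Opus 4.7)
The plan is to assemble the GLM equation by combining the three convergence lemmas (Lemmas~\ref{lem:Fr}, \ref{lem:Fh}, \ref{lem:Fd}) with the contour-integral identity \eqref{eqG}. All the serious analytic work — uniform boundedness of the partial sums, justification of the limit $n\to\infty$ via Jordan's lemma using \eqref{estG}, and the vanishing of the $G''_\pm$ contribution — has already been done, so this final step is essentially bookkeeping of three contributions.

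First I would fix $x,y$ with $\pm(y-x)>0$ and split the contour on the left-hand side of \eqref{eqG} as $\sigma = \sigma_\pm \cup \sigma_\mp^{(1)}$. As observed before \eqref{integ2}, $G''_\pm$ contributes zero on each piece (on $\sigma_\mp^{(1)}$ because there is no jump across the cut, and on $\sigma_\pm$ by the orthogonality in Lemma~\ref{lem1.1}(iii) applied at $x\neq y$), so only $G'_\pm$ remains. From the calculation carried out just before Lemma~\ref{lem:Fr},
\begin{equation*}
\frac{1}{2\pi\I}\oint_{\sigma_\pm} G'_\pm(\la,x,y)\, d\la = K_\pm(x,y) + F_{r,\pm}(x,y) \pm \int_x^{\pm\infty} K_\pm(x,t)\, F_{r,\pm}(t,y)\, dt,
\end{equation*}
and the rewriting carried out in the proof of Lemma~\ref{lem:Fh} (using \eqref{scatrel}, \eqref{eq:TgW}, and the fact that $T_\mp^{-1}\overline{W}=|W|^2 g_\mp\in\I\R$ on $\sigma_\mp^{(1)}$) gives
\begin{equation*}
\frac{1}{2\pi\I}\oint_{\sigma_\mp^{(1)}} G'_\pm(\la,x,y)\, d\la = F_{h,\pm}(x,y) \pm \int_x^{\pm\infty} K_\pm(x,t)\, F_{h,\pm}(t,y)\, dt.
\end{equation*}
For the right-hand side of \eqref{eqG}, the residue computation from the proof of Lemma~\ref{lem:Fd}, which uses the linear dependence \eqref{lidep} of $\tilde\phi_\pm$ and $\tilde\phi_\mp$ at an eigenvalue together with \eqref{cga}, yields
\begin{equation*}
\sum_{\la_k\in\sigma_d}\Res_{\la_k} G'_\pm(\la,x,y) = -F_{d,\pm}(x,y) \mp \int_x^{\pm\infty} K_\pm(x,t)\, F_{d,\pm}(t,y)\, dt.
\end{equation*}

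Substituting these three identities into \eqref{eqG}, moving the residue term to the left, and defining $F_\pm := F_{r,\pm}+F_{h,\pm}+F_{d,\pm}$ produces exactly \eqref{GLM}; comparison with \eqref{defFpm} shows that this $F_\pm$ matches the stated formula. The one non-routine part of the argument is that \eqref{eqG} itself is only a formal limit of the finite-$n$ identity \eqref{eqGn}: rigorous justification requires that each of the three partial sums $F_{r,n,\pm}$, $F_{h,n,\pm}$, $F_{d,n,\pm}$ converge uniformly with uniform bounds (provided by Lemmas~\ref{lem:Fr}--\ref{lem:Fd}), that Lemma~\ref{lem:F1} be invoked to pass the $K_\pm$-integral through the limits, and that the contributions on the large circles $C_{0,n,\pm}$ tend to zero via the Jordan-type estimate derived just before Lemma~\ref{lem:Fd}. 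With these pieces in place, the GLM equation follows by direct rearrangement.
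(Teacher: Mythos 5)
Your proposal is correct and follows essentially the same route as the paper, which itself only says ``putting everything together'' after establishing the decomposition $\sigma=\sigma_\pm\cup\sigma_\mp^{(1)}$, the vanishing of the $G''_\pm$ contribution, the three identities expressing the contour/residue pieces as $F_{r,\pm}$, $F_{h,\pm}$, $F_{d,\pm}$ plus their $K_\pm$-convolutions, and the Jordan-lemma estimate on the large circles. Your final assembly and the identification $F_\pm=F_{r,\pm}+F_{h,\pm}+F_{d,\pm}$ match the paper's argument exactly.
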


Moreover, we have
\begin{lemma}\label{lem:pF}
The function $F_\pm(x,y)$ is continuously differentiable with respect to both variables and there exists a real-valued function $q_\pm(x)$, $x\in\R$ with 
\begin{equation*}
\pm\int_a^{\pm\infty}(1+x^2)\vert q_\pm(x)\vert dx<\infty, \quad \text{ for all } a\in\R,
\end{equation*}
such that 
\beq\label{estF}
\vert F_\pm(x,y)\vert \leq \tilde C_\pm(x)Q_\pm(x+y),
\eeq
\beq\label{estpF}
\left\vert \frac{d}{dx}F_\pm(x,y)\right\vert \leq \tilde C_\pm(x)\left(\left\vert q_\pm\left(\frac{x+y}{2}\right)\right\vert +Q_\pm(x+y)\right),
\eeq
\beq\label{Fxx}
\pm\int_a^{\pm\infty}\left\vert \frac{d}{dx}F_\pm(x,x)\right\vert (1+x^2)dx<\infty,
\eeq
where 
\begin{equation*}
Q_\pm(x)=\pm\int_{\frac{x}{2}}^{\pm\infty}\vert q_\pm(t) \vert dt,
\end{equation*}
and $\tilde C_\pm(x)>0$ is a continuous function, which decreases monotonically as $x\to\pm\infty$.
\end{lemma}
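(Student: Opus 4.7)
The plan is to use the GLM equation \eqref{GLM} in reverse: we already have fine estimates \eqref{estK}--\eqref{Kxx} on $K_\pm$, and Lemma~\ref{lem:F1} has been set up precisely to propagate such bounds through integral equations with kernel $K_\pm$. First, I would set $q_\pm(x):=q(x)-p_\pm(x)$, so that by \eqref{decay} the moment condition $\pm\int_a^{\pm\infty}(1+x^2)|q_\pm(x)|dx<\infty$ is automatic, and the function $Q_\pm$ of the statement coincides with the one appearing in \eqref{estK}. Continuous differentiability of $F_\pm(x,y)$ in each variable will follow from the continuous differentiability of $K_\pm$ (via \eqref{estpK}) together with the three convergent series representations in \eqref{defFpm}, each of which can be differentiated termwise under the estimates of Lemmas~\ref{lem:Fr}, \ref{lem:Fh}, and~\ref{lem:Fd}.

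For the bound \eqref{estF}, I would read the GLM equation as
\[
F_\pm(x,y)\pm\int_x^{\pm\infty}K_\pm(x,t)F_\pm(t,y)\,dt=-K_\pm(x,y),
\]
which is of the form \eqref{inteq} with kernel $K_\pm$ obeying $|K_\pm(x,y)|\leq C_\pm(x)Q_\pm(x+y)$ and inhomogeneity $g_\pm(x,y)=-K_\pm(x,y)$ obeying the same bound \eqref{estg1}. Lemma~\ref{lem:F1} then reproduces this estimate for the unique solution $F_\pm$, giving \eqref{estF} with some continuous, monotonically decreasing $\tilde C_\pm(x)$.

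For \eqref{estpF}, I would differentiate the equivalent form $F_\pm(x,y)=-K_\pm(x,y)\mp\int_x^{\pm\infty}K_\pm(x,t)F_\pm(t,y)\,dt$ with respect to $x$, producing the boundary term $\pm K_\pm(x,x)F_\pm(x,y)$ and the integral term $\mp\int_x^{\pm\infty}\partial_x K_\pm(x,t)F_\pm(t,y)\,dt$. The first summand $-\partial_x K_\pm(x,y)$ satisfies the target bound directly by \eqref{estpK}; the boundary term is dominated by $C_\pm(x)Q_\pm(2x)\,\tilde C_\pm(x)Q_\pm(x+y)\leq \tilde C_\pm(x)Q_\pm(x+y)$ using \eqref{estK}, \eqref{estF}, and monotonicity of $Q_\pm$; and the remaining integral can be estimated via \eqref{estpK} and \eqref{estF} by a convolution of the form $\int_x^{\pm\infty}(|q_\pm(\tfrac{x+t}{2})|+Q_\pm(x+t))Q_\pm(t+y)\,dt$, which by a Fubini-type argument and the decay of $Q_\pm$ is again bounded by a constant (in $y$) times $Q_\pm(x+y)$, absorbed into $\tilde C_\pm(x)$. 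The analogous computation in the $y$ variable is symmetric. For the moment estimate \eqref{Fxx}, I would put $y=x$ in the differentiated equation and bound $|\tfrac{d}{dx}F_\pm(x,x)|$ by $|\tfrac{d}{dx}K_\pm(x,x)|$ plus integrated terms; multiplication by $(1+x^2)$ and integration over $[a,\pm\infty)$ yields a finite result by \eqref{Kxx} for the first piece, while the integrated pieces become convergent double integrals by Fubini and the first-moment control on $q_\pm$ together with the already-established \eqref{estF}.

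The main technical obstacle is the convolution-type estimate of $\int_x^{\pm\infty}|\partial_x K_\pm(x,t)|\,|F_\pm(t,y)|\,dt$ and its analogue on the diagonal: one must swap the order of integration and carefully exploit the monotonicity of $Q_\pm$ and the fact that $C_\pm(t)$ is decreasing so as to reproduce the claimed bounds uniformly in the appropriate half-axis. Once this bookkeeping is done, the three assertions \eqref{estF}, \eqref{estpF}, and \eqref{Fxx} all follow from a single application of Lemma~\ref{lem:F1} combined with differentiation under the integral sign.
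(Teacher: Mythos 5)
Your proposal follows essentially the same route as the paper: read the GLM equation as an integral equation of the form \eqref{inteq} with inhomogeneity $-K_\pm(x,y)$ and invoke Lemma~\ref{lem:F1} for \eqref{estF}, then differentiate the GLM equation in $x$ and $y$ and control the resulting boundary and convolution terms via \eqref{estpK} and the monotonicity of $Q_\pm$ (the paper's key inequality is precisely your convolution bound, in the form $\int_x^{\infty}\bigl(\bigl\vert q_+\bigl(\tfrac{x+t}{2}\bigr)\bigr\vert+Q(x+t)\bigr)Q(t+y)\,dt\leq (Q(2x)+Q_1(2x))Q(x+y)$), and finally set $y=x$ and use \eqref{Kxx} together with Fubini-type double-integral estimates for \eqref{Fxx}. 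The identification $q_\pm=q-p_\pm$ and the overall bookkeeping match the paper's argument.
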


\begin{proof}
Applying once more Lemma~\ref{lem:F1}, one obtains (\ref{estF}). Now, for simplicity, we will restrict our considerations to the + case and omit + whenever possible. Set $Q_1(u)=\int_u^\infty Q(t)dt$. Then, using (\ref{decay}), the functions $Q(x)$ and $Q_1(x)$ satisfy 
\begin{equation*}
\int_a^\infty Q_1(t)dt<\infty, \quad \int_a^\infty Q(t)(1+\vert t\vert )dt<\infty. 
\end{equation*}
Differentiating (\ref{GLM}) with respect to $x$ and $y$ yields 
\begin{equation*}
\vert F_x(x,y)\vert \leq \vert K_x(x,y)\vert+\vert K(x,x)F(x,y)\vert +\int_x^\infty\vert K_x(x,t)F(t,y)\vert dt,
\end{equation*}
\begin{equation*}
F_y(x,y)+K_y(x,y)+\int_x^\infty K(x,t)F_y(t,y)dt=0.
\end{equation*}
We already know that the functions $Q(x)$, $Q_1(x)$, $C(x)$, and $\tilde C(x)$ are monotonically decreasing and positive. Moreover,
\begin{equation*}
\int_x^\infty \Big(\left\vert q_+\Big(\frac{x+t}{2}\Big)\right\vert +Q(x+t)\Big)Q(t+y)dt\leq (Q(2x)+Q_1(2x))Q(x+y),
\end{equation*}
thus we can estimate $F_x(x,y)$ and $F_y(x,y)$ can be estimates using (\ref{estpK}) and the method of successive approximation. It is left to prove (\ref{Fxx}). Therefore consider (\ref{GLM}) for $x=y$ and differentiate it with respect to $x$:
\begin{equation*}
\frac{d F(x,x)}{dx}+\frac{d K(x,x)}{dx}-K(x,x)F(x,x)+\int_x^\infty (K_x(x,tF(t,x)+K(x,t)F_y(t,x))dt=0.
\end{equation*}
Next (\ref{estK}) and (\ref{estF}) imply 
\begin{equation*}
\vert K(x,y)F(x,x)\vert \leq \tilde C(a)C(a)Q^2(2x), \quad \text{ for } x>a,
\end{equation*}
where $\int_a^\infty (1+x^2)Q^2(2x)dx<\infty$. Moreover, by (\ref{estpK}) and (\ref{estpF})
\beq\nn
\left|K_x^\prime (x,t) F(t,x)\right| + \left|K(x,t) F_y^\prime
(t,x)\right|\leq 4\tilde C(a)
\hat C(a)\Bigl\lbrace\Bigl\lvert q\Bigl(\frac{x+t}{2}\Bigr)\Bigr\rvert Q(x+t) +
Q^2(x+t)\Bigr\rbrace,
\eeq
together with the estimates
\begin{align*}
& \int_a^\infty d x\,x^2\int_x^\infty Q^2(x+t)d t\leq\int_a^\infty
|x| Q(2x)d x\ \sup_{x\geq a}\int_x^\infty|x+t| Q(x+t)d t<\infty,\\
& \int_a^\infty x^2\int_x^\infty\Bigl\lvert q\Bigl(\frac{x+t}{2}\Bigr)\Bigr\rvert
Q(x+t)d t\leq\\
& \qquad
\leq\int_a^\infty Q(2x)d x \ \sup_{x\geq a} \int_x^\infty
\Bigl\lvert q\Bigl(\frac{x+t}{2}\Bigr)\Bigr\rvert(1 +(x+t)^2)d t<\infty,
\end{align*}
and (\ref{Kxx}), we arrive at (\ref{Fxx}).

\end{proof}

In summary, we have obtained the following necessary conditions for the scattering data:
\begin{theorem}
The scattering data 
\begin{align}\nn
{\mathcal S} = \Big\{ & R_+(\lambda),\,T_+(\lambda),\, \lambda\in\sigma_+^{\mathrm{u,l}}; \,
R_-(\lambda),\,T_-(\lambda),\, \lambda\in\sigma_-^{\mathrm{u,l}};\\\label{4.6}
& \lambda_1,,\lambda_2,\dots \in\mathbb{R}\setminus (\sigma_+\cup\sigma_-),\,
\gamma_1^\pm, \gamma_2^\pm,\dots \in\mathbb{R}_+\Big\}
\end{align} 
possess the properties listed in Theorem~\ref{lem:scat}, \ref{thm:asym}, \ref{thm:TW}, and \ref{thm:pR}, and Lemma~\ref{lem:Fr}, \ref{lem:Fh}, and \ref{lem:Fd}. The functions $F_\pm(x,y)$ defined in \eqref{defFpm}, possess the properties listed in Lemma~\ref{lem:pF}.
\end{theorem}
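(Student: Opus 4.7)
The plan is straightforward: this final theorem is a collection result, bundling together the properties of the scattering data that have already been established piece by piece throughout the section. So the proof proposal essentially amounts to pointing at the prior statements and verifying that nothing is missed.

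First I would note that the listed objects in $\mathcal{S}$ are exactly the ones for which we have derived properties: the coefficients $R_\pm(\lambda)$ and $T_\pm(\lambda)$ on $\sigma_\pm^{\mathrm{u,l}}$ were defined in \eqref{defTR}; the eigenvalues $\lambda_k$ as the zeros of $\tilde W(z)$ in $\R \setminus(\sigma_+\cup\sigma_-)$ arise from Theorem~\ref{thm:TW}; and the norming constants $\gamma_k^\pm$ are introduced in \eqref{defga}. Having collected these objects, the symmetry and unitarity identities (i)--(iv) for $(R_\pm,T_\pm)$ are exactly the content of Theorem~\ref{lem:scat}; the large-$\lambda$ asymptotics are Theorem~\ref{thm:asym}; the analytic continuation of $T_\pm$ and the behavior of $\tilde W$, $\hat W$ (together with the key identity \eqref{dertiW} connecting $\tilde W'(\lambda_k)$ to $\gamma_k^\pm$) are Theorem~\ref{thm:TW}; and the continuity of $R_\pm$ on $\operatorname{int}(\sigma_\pm^{\mathrm{u,l}})$, together with the boundary values at common edges, is Theorem~\ref{thm:pR}.

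Next I would handle the part of the statement concerning the auxiliary functions. The uniform boundedness and continuity of the partial-sum series appearing in the three constituents of $F_\pm(x,y)$ were established in Lemma~\ref{lem:Fr} (for the reflection-coefficient piece $F_{r,\pm}$), Lemma~\ref{lem:Fh} (for the transmission piece $F_{h,\pm}$ over $\sigma_\mp^{(1),u}$), and Lemma~\ref{lem:Fd} (for the discrete-spectrum piece $F_{d,\pm}$). Summing these three pieces according to \eqref{defFpm} then produces $F_\pm(x,y)$, and Lemma~\ref{lem:pF} provides the required estimates \eqref{estF}, \eqref{estpF}, and \eqref{Fxx} together with continuous differentiability. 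There is essentially no new work: one just observes that each property listed in the theorem's conclusion is transcribed verbatim from one of the prior statements.

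The only mildly delicate point is making sure that the three constituent sums in \eqref{defFpm} can be added without any interference (for instance that the domains of integration do not double-count contributions on $\sigma^{(2)}$ versus $\sigma_\mp^{(1),u}$, and that the formally separated parts of $G_\pm$ were legitimately regrouped during the derivation of the GLM equation). This was already handled inside the proofs of Lemmas~\ref{lem:Fr}--\ref{lem:Fd} when we verified $\oint_{\sigma_\mp^{(1)}}G_\pm''(\lambda,x,y)d\lambda=0$ and applied Jordan's lemma along the circles $C_{0,n,\pm}$, so it suffices to cite those arguments. Accordingly the whole proof reduces to: \emph{follows by combining Theorems~\ref{lem:scat}, \ref{thm:asym}, \ref{thm:TW}, \ref{thm:pR} and Lemmas~\ref{lem:Fr}, \ref{lem:Fh}, \ref{lem:Fd}, \ref{lem:pF}.} The hard part, if any, is purely bookkeeping—making sure the notation for $\mathcal{S}$ in \eqref{4.6} matches exactly the objects whose properties we have catalogued—rather than any new analytic estimate.
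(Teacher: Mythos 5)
Your proposal is correct and matches the paper exactly: the theorem is stated there as a summary ("In summary, we have obtained the following necessary conditions...") with no separate proof, since every listed property is taken verbatim from Theorems~\ref{lem:scat}--\ref{thm:pR} and Lemmas~\ref{lem:Fr}--\ref{lem:pF}. Your additional remark about the legitimacy of regrouping the three constituents of $F_\pm$ is a fair observation, but as you note it was already discharged in the derivation of the GLM equation, so nothing further is needed.
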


\subsection*{Acknowledgements}

I want to thank Ira Egorova and Gerald Teschl for many discussions on this topic.


\begin{thebibliography}{99}
\bibitem{BET} A. Boutet de Monvel, I. Egorova, and G. Teschl,
{\it Inverse scattering theory for one-dimensional Schr\"odinger operators with steplike finite-gap
potentials}, J. d'Analyse Math. {\bf 106:1}, 271--316, (2008).
\bibitem{BF} V.S. Buslaev and V.N. Fomin, {\it An inverse scattering problem for the one-dimensional Schr\"odinger equation on the entire axis}, Vestnik Leningrad. Univ. {\bf 17}, no. 1, 56--64, (1962).
\bibitem{CL} E. Coddington and N. Levinson, {\it Theory of Ordinary Differential Equations}, Mc Graw-Hill, New York, 1955.
\bibitem{CK} A. Cohen and T. Kappeler, {\it Scattering and inverse scattering for steplike potentials in the Schr\"odinger equation}, Indiana Univ. Math. J. {\bf 34}, no. 1, 127--180, (1985).
\bibitem{DS} E.B. Davies and B. Simon, {\it Scattering theory for systems with different spatial asymptotics on the left and right}, Comm. Math. Phys. {\bf 63}, 277--301, (1978).
\bibitem{ET} I. Egorova and G. Teschl, {\it A Paley--Wiener theorem for periodic scattering with applications to the Korteweg--de Vries equation}, Zh. Mat. Fiz. Anal. Geom. {\bf 6}, 21--33 (2010).
\bibitem{ET2} I. Egorova and G. Teschl, {\it On the Cauchy problem for the modified Korteweg--de Vries equation with steplike finite-gap initial data}, in Proceedings of the International Research Program on Nonlinear PDE, H. Holden and K. H. Karlsen (eds), 151--158, Contemp. Math. 526, Amer. Math. Soc., Providence (2010).
\bibitem{ET3} I. Egorova and G. Teschl, {\it On the Cauchy problem for the Korteweg--de Vries equation with steplike finite-gap initial data II. Perturbations with finite moments}, J. d'Analyse Math. {\bf 115}, 71--101 (2012).
\bibitem{F1} N.E. Firsova, {\it The direct and inverse scattering problems for the one-dimensional perturbed Hill operator}, Math. USSR Sbornik {\bf 58}, no.2, (1987).
\bibitem{GGKM} C.S. Gardner, J.M. Green, M.D. Kruskal, and R.M. Miuram, {\it Method for solving the Korteweg--de Vries equation}, Phys. Rev. Lett. {\bf 19}, 1095--1097, (1967).
\bibitem{GH} F. Gesztesy and H. Holden, {\it Soliton equations and their algebro-geometric solutions, Volume 1: (1+1)-dimensional continuous models}, Cambridge studies in advanced mathematics {\bf 79}, Cambrige University Press, Cambridge, 2003.
\bibitem{G} K. Grunert, {\it The Transformation operator for Schr\"odinger operators on almost periodic infinite-gap backgrounds}, J.Diff.Eq. {\bf 250}, 3534--3558 (2011).
\bibitem{GET} K. Grunert, I. Egorova, and G. Teschl, {\it On the Cauchy problem for the Korteweg--de Vries equation with steplike finite-gap initial data I. Schwartz-type perturbations}, Nonlinearity {\bf 22}, 1431--1457 (2009).
\bibitem{GT} K. Grunert and G. Teschl, {\it Long-time asymptotics for the Korteweg--de Vries equation via nonlinear steepest descent}, Math. Phys. Anal. Geom. {\bf 12}, 287--324 (2009).
\bibitem{kr} H. Kr\"uger, {\it On perturbations of quasi-periodic Schr\"odinger operators}, J.Diff.Eq. {\bf 249}, 1305--1321 (2010).
\bibitem{krt1} H. Kr\"uger and G. Teschl, {\it Effective Pr\"ufer angles and relative oscillation criteria}, J. Diff. Eq. {\bf 245}, 3823--3848 (2008).
\bibitem{L} B.M. Levitan, {\it Inverse Sturm--Liouville Problems}, VNU Science Press, Utrecht, 1987.
\bibitem{L2} B.M. Levitan, {\it Approximation of infinite-zone by finite-zone potentials}, Math. USSR-Izv, {\bf1}, 55--87, (1983). 
\bibitem{L3} B.M. Levitan, {\it Approximation of infinite-zone potentials by finite-zone potentials}, Math. USSR-Izv {\bf20}, no. 1, 55--87, (1983).
\bibitem{LZ} B.M. Levitan and V.V. Zhikov, {\it Almost Periodic Functions and Differential Equations}, Cambridge University Press, Cambridge, 1982.
\bibitem{M} V. A. Marchenko, {\it Sturm-Liouville Operators and Applications}, Birkh\"auser, Basel, 1986.
\bibitem{MT} A. Mikikits-Leitner and G. Teschl, {\it Trace formulas for Schr\"odinger operators in connection with scattering theory for finite-gap backgrounds}, in Spectral Theory and Analysis, J. Janas (ed.) et al., 107--124, Oper. Theory Adv. Appl. {\bf 214}, Birkh\"auser, Basel, 2011.
\bibitem{MT2} A. Mikikits-Leitner and G. Teschl, {\it Long-time asymptotics of perturbed finite-gap Korteweg-de Vries solutions}, J. d'Analyse Math. {\bf 116}, 163--218 (2012).
\bibitem{RK} F.S. Rofe--Beketov and A.M. Kohlkin, {\it Spectral Analysis of Differential Operators; Interplay Between Spectral and Oscillatory Properties}, World Scientific Monograph Series in Mathematics {\bf 7}, World Scientific, Singapore, 2005. 
\bibitem{SY} M. Sodin and P. Yuditskii, {\it Almost periodic Sturm--Liouville operators with Cantor homogeneous spectrum}, Comment. Math. Helv. {\bf 70}, no.4, 639--658, (1995). 
\bibitem{SY2} M. Sodin and P. Yuditskii, {\it Almost periodic Jacobi matrices with homogeneous spectrum, infinite--dimensional Jacobi inversion, and Hardy spaces of character--automorphic functions}, J. Geom. Anal. {\bf 7}, no.3, 387--435, (1997).
\bibitem{Te} G. Teschl, {\it Mathematical Methods in Quantum Mechanics; With Applications to Schr\"odinger Operators}, Graduate Studies in Mathematics, Volume {\bf 99}, Amer. Math. Soc., Providence, 2009.
\bibitem{T} E.C. Titchmarsh, {\it Eigenfunction Expansions Associated with Second-Order Differential
Equations}, Clarendon Press, Oxford, 1946.
\end{thebibliography}
\end{document}